  \def\doi#1{\url{https://doi.org/#1}}}
\pgfplotsset{compat=newest}
\pgfplotsset{cycle list name=own cycle}
\newcommand{\norm}[1]{\lVert#1\rVert}
\newcommand{\diff}{\mathop{}\!\mathrm{d}}
\DeclareMathOperator*{\argmin}{arg\,min}
\DeclareMathOperator{\supp}{supp}
\DeclareMathOperator{\trace}{tr}
\DeclareMathOperator{\spn}{span}
\DeclareMathOperator{\diag}{diag}
\theoremstyle{plain}
\newtheorem{theorem}{Theorem}[section]
\newtheorem{lemma}{Lemma}[section]
\theoremstyle{definition}
\newtheorem{definition}{Definition}[section]
\newtheorem{example}{Example}[section]
\theoremstyle{remark}
\newtheorem{remark}{Remark}[section]
\begin{document}

\title{Chebyshev smoothing with adaptive block-FSAI preconditioners for the multilevel solution of higher-order problems}
\date{}

\author[1,2]{Pablo Jim\'{e}nez Recio}
\author[1,2]{Marc Alexander Schweitzer}

\affil[1]{Institute for Numerical Simulation, Universit\"{a}t Bonn, 53111 Bonn, Germany}
\affil[2]{Fraunhofer SCAI, 53757 Sankt Augustin, Germany}

\renewcommand\Affilfont{\itshape\small}

\maketitle

\begin{abstract}
	In this paper, we assess the performance of adaptive and nested factorized sparse approximate inverses as smoothers in multilevel V-cycles, when smoothing is performed following the Chebyshev iteration of the fourth kind. For our test problems, we rely on the partition of unity method to discretize the biharmonic and triharmonic equations in a multilevel manner. Inspired by existing algorithms, we introduce a new adaptive algorithm for the construction of sparse approximate inverses, based on the block structure of matrices arising in the partition of unity method. Additionally, we also present a new (and arguably simpler) formulation of the Chebyshev iteration of the fourth kind.
\end{abstract}

\section{Introduction}\label{sec:intro}

The use of the Chebyshev iteration for concatenating smoothing steps in multigrid/multilevel iterations has been redefined by the significant work of Lottes \cite{Lottes2023}, which has allowed dispensing with the estimate of the lower end of the fine spectrum of the preconditioned matrix. The new Chebyshev iteration of the fourth kind was quickly adopted in PETSc and the deal.II library \cite{Arndt2023}. In this work, we combine this new Chebyshev iteration with underlying Factorized Sparse Approximate Inverse (FSAI) preconditioners, as developed in recent years by Janna et al. \cite{Janna2010, Janna2011, Janna2015}, and which have already been used as smoothers in multilevel approaches \cite{PaludettoMagri2019, Janna2025}. We will put these preconditioners into practice for a variety of problems generated with the Partition of Unity Method (PUM) of Schweitzer and Griebel \cite{Schweitzer2003, Griebel2002}.

The paper is organized as follows. In \cref{sec:fsai} we introduce the block-FSAI preconditioner, where our focus on block matrices follows from the natural block structure of matrices arising from the PUM. We pay special attention to developing our own adaptive algorithm for the construction of a suitable sparsity pattern for PUM block matrices, based on the one from \cite{Janna2011}, and also describe the possibility of nesting FSAI preconditioners.

In \cref{sec:chebyshev_iteration} we present the Chebyshev iteration, explain its \emph{raison d'être} and delve into its use as a way to optimally concatenate smoothing steps in multilevel iterations, paying special attention to the work of Lottes \cite{Lottes2023}. Given the simplicity of the new Chebyshev iteration of the fourth kind (which we simplify even further), we choose it for our work instead of the optimized (but more involved) version also introduced by Lottes.

In \cref{sec:pum} we provide a brief description of the PUM, whose discrete function spaces possess a natural geometric multilevel structure, as well as basis functions of any desired regularity.

Finally, in \cref{sec:experiments} we present some numerical experiments to attest the effectiveness and efficiency of the (adaptive and nested) block-FSAI preconditioners, which we embed within a Chebyshev iteration. The PUM allows us to test our multilevel solvers on higher-order problems such as the biharmonic and triharmonic equations, which provide a challenging scenario, since the usual Jacobi and Gauss-Seidel iterations are not particularly effective smoothers in this case, or at least not for PUM discretizations. We note that the multilevel solution of the biharmonic equation has already been investigated by other authors, see e.g.\ \cite{Sogn2019} and the references therein.

\section{The FSAI preconditioner}\label{sec:fsai}

For an invertible matrix \(A\in \mathbb{R}^{N\times N}\) and some predefined sparsity pattern \(\mathcal{P} \subset \{1, \dots, N\}^2\), a \emph{sparse approximate inverse} (SAI) of \(A\) based on \(\mathcal{P}\) is some matrix \(M\in \mathbb{R}^{N\times N}\) with sparsity pattern \(\mathcal{P}(M) \coloneqq \{(i,j) \,\colon\, M_{ij} \neq 0\} \subset \mathcal{P}\) which approximates \(A^{-1}\), usually by solving
\begin{equation*}
	M \coloneqq \argmin_{\mathcal{P}(\tilde{M}) \subset \mathcal{P}} \norm{I - \tilde{M} A}_F,
\end{equation*}
with \(\norm{\cdot}_F\) the Frobenius norm, defined by \(\norm{B}_F^2 = \trace(B^{\top} B)\). This constitutes the basis of the SAI preconditioner \cite{Sedlacek2012}.

Note however that, even if the matrix \(A\) is symmetric positive definite (s.p.d.), the resulting matrix \(M\) will not be s.p.d., and thus the preconditioned matrix will also not be s.p.d. The Factorized Sparse Approximate Inverse (FSAI) preconditioner overcomes this issue, and (following the construction of Janna et al. \cite{Janna2015}) it does so by constructing a sparse approximation \(G \approx L^{-1}\) to the inverse of the (unknown) triangular Cholesky factor \(L\) of \(A = L L^{\top}\), leading to \(M \coloneqq G^{\top} G \approx L^{-\top} L^{-1} = A^{-1}\), with the preconditioned matrix being \(G A G^{\top}\).

Let us first introduce the notation that we will use for working with block-structured matrices as those arising in the PUM.

\begin{definition}\label{def:block_structure}
	Let \(n \in \mathbb{N}\) and \(\mathfrak{B} \coloneqq \{m_i\}_{i=1}^n\) a set of \(n\) positive integers with \(N = \sum_{i=1}^n m_i\). By saying that a matrix \(A\in \mathbb{R}^{N \times N}\) has the block structure \(\mathfrak{B}\), we allow ourselves to interpret it as
	\begin{align*}
		A &= (A_{ij}), \quad i,j \in \{1,\dots,n\}, \\
		A_{ij} &= (a_{kl}) \in \mathbb{R}^{m_i\times m_j}.
	\end{align*}
	We further denote
	\begin{equation*}
		\mathcal{P}(A) \coloneqq \{(i, j)\, \colon \, A_{ij} \neq \mathbb{O}_{m_i \times m_j}\} \subset \{1,\dots,n\}^2
	\end{equation*}
	the sparsity pattern of such a matrix, and call a matrix block-diagonal if
	\begin{equation*}
		\mathcal{P}(A) \subset \{(i, i)\,\colon\, i \in \{1,\dots,n\}\}.
	\end{equation*}	
	Finally, we denote \(\diag_{\mathfrak{B}}(A)\) the block-diagonal component of a matrix \(A\), i.e.\ the matrix with diagonal blocks \(A_{ii}\) and \(\mathbb{O}_{m_i \times m_j}\) off-diagonal blocks.
\end{definition}

For any block matrix \(B\) as in \cref{def:block_structure} and sets of indices \(\mathcal{I},\mathcal{J}\subset \{1,\dots,n\}\), we will denote \(B[\mathcal{I},\mathcal{J}]\) the restriction of \(B\) to row indices in \(\mathcal{I}\) and column indices in \(\mathcal{J}\). We may now define the block-FSAI preconditioner.

\begin{definition}\label{def:fsai_matrices}
	Let \(A\) be a s.p.d. matrix with block structure \(\mathfrak{B} \coloneqq \{m_i\}_{i=1}^n\) as in \cref{def:block_structure}. Let \(\mathcal{P} \subset \{1,\dots,n\}^2\) be some lower triangular sparsity pattern containing the diagonal, i.e.
	\begin{equation}\label{eq:lower_triangular_pattern}
		(i, i) \in \mathcal{P} \quad \forall i \in \{1, \dots, n\},\qquad \text{and} \qquad i \geq j \quad \forall (i, j) \in \mathcal{P},
	\end{equation}
	Let us denote its rows, with and without the corresponding row index, by
	\begin{equation*}
		\mathcal{P}_i \coloneqq \{j \,\colon\, (i,j)\in \mathcal{P}\}, \quad \tilde{\mathcal{P}}_i \coloneqq \mathcal{P}_i \setminus \{i\}, \qquad i \in \{1,\dots,n\}.
	\end{equation*}
	We define the \emph{block-FSAI matrix for \(A\) based on \(\mathcal{P}\)} as the matrix \(F\) with block structure \(\mathfrak{B}\), sparsity pattern \(\mathcal{P}(F)\subset\mathcal{P}\) and non-zero block-entries given by
	\begin{equation}\label{eq:fsai_f_definition}
		F_{ii} = \mathbb{I}_{m_i \times m_i}, \quad F[\{i\}, \, \tilde{\mathcal{P}}_i] = - A[\{i\},\, \tilde{\mathcal{P}}_i]\, A[\tilde{\mathcal{P}}_i, \tilde{\mathcal{P}}_i]^{-1}, \qquad i \in \{1,\dots,n\}.
	\end{equation}
	
	The \emph{block-FSAI preconditioning matrix for \(A\) based on \(\mathcal{P}\)} is then \(M \coloneqq F^{\top} S^{-1} F\), where \(S\) is the block-diagonal matrix (also with block structure \(\mathfrak{B}\)) given by
	\begin{equation}\label{eq:schur_complement_fsai}
		S_{ii} \coloneqq A_{ii} - A[\{i\},\, \tilde{\mathcal{P}}_i] A[\tilde{\mathcal{P}}_i, \tilde{\mathcal{P}}_i]^{-1} A[\tilde{\mathcal{P}}_i, \{i\}], \qquad i \in \{1,\dots,n\}.
	\end{equation}
	
	We may also write \(M = G^{\top} G\) for \(G = S^{-1/2} F\), with the preconditioned matrix then being \(G A G^{\top}\).
\end{definition}

\begin{remark}
	From now on, unless explicitly stated otherwise, every reference to a ``lower triangular sparsity pattern'' will mean also including the diagonal, i.e.\ a sparsity pattern fulfilling both conditions in \eqref{eq:lower_triangular_pattern}.
\end{remark}

\begin{lemma}\label{thm:fsai_block_diagonal}
	Let \(A\) be a s.p.d. matrix with block structure \(\mathfrak{B}\) and \(F\) be defined as in \eqref{eq:fsai_f_definition} for some lower triangular sparsity pattern \(\mathcal{P}\). Let \(S\) be the block-diagonal matrix with blocks given by \eqref{eq:schur_complement_fsai}. It holds that
	\begin{equation*}
		\diag_{\mathfrak{B}}(FA) = S = \diag_{\mathfrak{B}}(FAF^{\top}),
	\end{equation*}
	and in particular it follows that the block-diagonal of the block-FSAI preconditioned matrix, \(\diag_{\mathfrak{B}}(GAG^{\top})\), is the identity matrix.
\end{lemma}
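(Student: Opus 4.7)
The plan is to compute the relevant block rows of $FA$ explicitly from the definition of $F$, observe that the construction was tailored precisely so that $FA$ vanishes on the off-diagonal part of the lower triangular pattern, and then use this sparsity to simplify $FAF^{\top}$.

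First, I would fix an index $i$ and use that the $i$-th block row of $F$ is supported only on columns $\mathcal{P}_i = \tilde{\mathcal{P}}_i \cup \{i\}$ to rewrite
\begin{equation*}
	(FA)[\{i\},\,\cdot\,] = F[\{i\},\mathcal{P}_i]\,A[\mathcal{P}_i,\,\cdot\,] = A[\{i\},\,\cdot\,] + F[\{i\},\tilde{\mathcal{P}}_i]\,A[\tilde{\mathcal{P}}_i,\,\cdot\,],
\end{equation*}
using $F_{ii} = \mathbb{I}$. Plugging in the definition \eqref{eq:fsai_f_definition} of $F[\{i\},\tilde{\mathcal{P}}_i]$ and restricting the column index to $\tilde{\mathcal{P}}_i$ yields $(FA)[\{i\},\tilde{\mathcal{P}}_i] = 0$, while restricting to column $i$ gives $(FA)_{ii} = A_{ii} - A[\{i\},\tilde{\mathcal{P}}_i]\,A[\tilde{\mathcal{P}}_i,\tilde{\mathcal{P}}_i]^{-1}A[\tilde{\mathcal{P}}_i,\{i\}] = S_{ii}$, establishing the first equality $\diag_{\mathfrak{B}}(FA) = S$.

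For the second equality, I would expand
\begin{equation*}
	(FAF^{\top})_{ii} = \sum_{k} (FA)_{ik}\,F_{ik}^{\top} = \sum_{k \in \mathcal{P}_i} (FA)_{ik}\,F_{ik}^{\top},
\end{equation*}
again using that $F$ has its $i$-th row supported in $\mathcal{P}_i$. The key observation, already established, is that $(FA)_{ik} = 0$ for every $k \in \tilde{\mathcal{P}}_i$, so only the $k = i$ term survives; since $F_{ii} = \mathbb{I}$ and $(FA)_{ii} = S_{ii}$, this term equals $S_{ii}$. This is the crux of the argument, and it is essentially free once the first step has been done — the lower triangular structure of $\mathcal{P}$ is what makes the cancellation pattern align with $F^{\top}$.

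The final statement about $GAG^{\top}$ follows by writing $GAG^{\top} = S^{-1/2}(FAF^{\top})S^{-1/2}$ and using that $S^{-1/2}$ is block-diagonal with block structure $\mathfrak{B}$, so $\diag_{\mathfrak{B}}$ commutes with the congruence: $\diag_{\mathfrak{B}}(GAG^{\top}) = S^{-1/2}\,\diag_{\mathfrak{B}}(FAF^{\top})\,S^{-1/2} = S^{-1/2} S\, S^{-1/2} = \mathbb{I}$. The only step that requires any real bookkeeping is the first, where one must keep careful track of which indices belong to $\mathcal{P}_i$ versus $\tilde{\mathcal{P}}_i$; everything else is a consequence of that calculation together with the block-diagonality of $S$.
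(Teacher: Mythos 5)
Your proof is correct and follows essentially the same route as the paper: both hinge on the observation that $(FA)[\{i\},\mathcal{P}_i]=[S_{ii}\ \ 0]$ (equivalently $(AF^{\top})[\mathcal{P}_i,\{i\}]=[S_{ii};\,0]$ by symmetry of $A$), which immediately gives $\diag_{\mathfrak{B}}(FA)=S$ and collapses the sum for $(FAF^{\top})_{ii}$ to the single surviving term $S_{ii}$. The only cosmetic difference is that the paper presents the second identity as a $2\times 2$ block-matrix product while you phrase it as a sum over $k\in\mathcal{P}_i$, and you spell out the final step $\diag_{\mathfrak{B}}(GAG^{\top})=S^{-1/2}\,S\,S^{-1/2}=\mathbb{I}$ which the paper calls trivial; the substance is identical.
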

\begin{proof}
	The first identity follows directly from \eqref{eq:fsai_f_definition} and \eqref{eq:schur_complement_fsai}. Indeed,
	\begin{equation*}
		(FA)_{ii} = F[\{i\}, \, \mathcal{P}_i] \, A[\mathcal{P}_i,\, \{i\}] = S_{ii}, \quad \forall i \in \{1,\dots,n\}.
	\end{equation*}
	The second one can also be easily checked relying on the first one and writing
	\begin{multline*}
		(F A F^{\top})_{ii} = \begin{pmatrix}
			I_{ii} & F[\{i\}, \, \tilde{\mathcal{P}}_i]
		\end{pmatrix}
		\begin{pmatrix}
			A_{ii} & A[\{i\},\, \tilde{\mathcal{P}}_i] \\
			A[\tilde{\mathcal{P}}_i, \{i\}] & A[\tilde{\mathcal{P}}_i, \tilde{\mathcal{P}}_i] 
		\end{pmatrix}
		\begin{pmatrix}
			I_{ii} \\ F^{\top}[\tilde{\mathcal{P}}_i, \, \{i\}]
		\end{pmatrix} =\\
		= \begin{pmatrix}
			I_{ii} & F[\{i\}, \, \tilde{\mathcal{P}}_i]
		\end{pmatrix}
		\begin{pmatrix}
			S_{ii} \\ 0
		\end{pmatrix} = S_{ii}.
	\end{multline*}
	The final statement follows trivially for \(G = S^{-1/2} F\).
\end{proof}

\begin{remark}
	For the simplest choice of a diagonal sparsity pattern
	\begin{equation*}
		\mathcal{P} = \{(i, i) \,\colon\, i \in\{1, \dots, n\}\},
	\end{equation*}	
	the resulting matrix \(F\) is the identity matrix and \(S = \diag_{\mathfrak{B}}(A)\), so the block-FSAI preconditioner reduces to a block-Jacobi preconditioner.
\end{remark}

Let us now introduce an important concept which, as we will show, is deeply connected to the FSAI preconditioner.

\begin{definition}\label{def:kaporin_number}
	For a s.p.d. matrix \(B\in\mathbb{R}^{N\times N}\), the \emph{Kaporin number} is defined as
	\begin{equation*}
		\beta(B) \coloneqq \frac{\trace(B)}{N \,\det(B)^{1/N}},
	\end{equation*}
	that is, the ratio between the algebraic and the geometric means of the eigenvalues of \(B\).
\end{definition}

\begin{remark}\label{rm:kaporin_properties}
	Some trivial properties of the Kaporin number \(\beta(\cdot)\) are that \(\beta(B) \geq 1\), and that \(\beta(B) = 1\) if and only if all eigenvalues of \(B\) coincide. Additionally, from this last property it follows that, for any full-rank matrix \(C\), \(\beta(C B C^{\top}) = 1\) if and only if \(C^{\top} C = \alpha B^{-1}\) for some \(\alpha > 0\).
\end{remark}

We now reproduce a result from Janna et al. \cite{Janna2011}, which will be useful for the subsequent theorem.

\begin{lemma}\label{thm:optimal_diagonal_scaling}
	Let \(B \in \mathbb{R}^{N\times N}\) be a s.p.d. matrix with block structure \(\mathfrak{B}\). Then \(\beta(J B J^{\top})\) is minimized over full-rank block-diagonal matrices \(J\) if and only if there exists \(\alpha > 0\) such that \((J^{\top} J)_{kk} = \alpha (B_{kk})^{-1}\).
\end{lemma}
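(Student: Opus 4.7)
Write $C_k \coloneqq J_{kk}^{\top} J_{kk} = (J^\top J)_{kk}$, which is s.p.d.\ because $J_{kk}$ is full-rank. The first step is to rewrite both numerator and denominator of $\beta(JBJ^{\top})$ in terms of the blocks $C_k$ and $B_{kk}$. By the cyclic property of the trace and the block-diagonal structure of $J^{\top} J$,
\begin{equation*}
  \trace(JBJ^{\top}) = \trace((J^{\top} J)\,B) = \sum_{k=1}^{n} \trace(C_k B_{kk}),
\end{equation*}
while $\det(JBJ^{\top}) = \det(B)\prod_{k=1}^{n} \det(C_k)$. Hence $\beta(JBJ^{\top})$ is a ratio whose only dependence on $J$ is through the $C_k$'s, and the task becomes minimization over the $n$-tuple of s.p.d.\ matrices $C_1,\dots,C_n$.

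Next I would apply the classical matrix arithmetic--geometric mean inequality blockwise. The matrix $C_k B_{kk}$ is similar to the s.p.d.\ matrix $B_{kk}^{1/2} C_k B_{kk}^{1/2}$, so its eigenvalues are positive reals. The scalar AM--GM inequality applied to those eigenvalues gives
\begin{equation*}
  \trace(C_k B_{kk}) \geq m_k \bigl[\det(C_k)\det(B_{kk})\bigr]^{1/m_k},
\end{equation*}
with equality if and only if all eigenvalues coincide, i.e.\ $B_{kk}^{1/2} C_k B_{kk}^{1/2} = \alpha_k \mathbb{I}_{m_k}$ for some $\alpha_k>0$, which is equivalent to $C_k = \alpha_k B_{kk}^{-1}$.

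Then I would combine these $n$ estimates via the weighted scalar AM--GM inequality with weights $m_k/N$: setting $x_k \coloneqq [\det(C_k)\det(B_{kk})]^{1/m_k}$,
\begin{equation*}
  \sum_{k=1}^{n} m_k x_k \geq N \prod_{k=1}^{n} x_k^{m_k/N} = N \Bigl[\textstyle\prod_{k=1}^{n} \det(C_k)\det(B_{kk})\Bigr]^{1/N},
\end{equation*}
with equality if and only if all $x_k$ are equal. Chaining the two inequalities and dividing by $N[\det(B)\prod_k \det(C_k)]^{1/N}$ yields the $J$-independent lower bound
\begin{equation*}
  \beta(JBJ^{\top}) \geq \biggl(\frac{\prod_{k=1}^{n}\det(B_{kk})}{\det(B)}\biggr)^{1/N}.
\end{equation*}

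Finally I would trace back the equality conditions. The blockwise matrix AM--GM is tight iff $C_k = \alpha_k B_{kk}^{-1}$, in which case $x_k = \alpha_k$; the weighted scalar AM--GM is then tight iff the $\alpha_k$'s are all equal to a common $\alpha>0$. Together, these reproduce exactly the condition $(J^{\top} J)_{kk} = \alpha (B_{kk})^{-1}$ of the lemma, proving both directions. The main subtlety I anticipate is the bookkeeping of the two equality cases and verifying that the block-diagonal constraint on $J$ is what forces the minimum to be attained on the orbit described by the lemma (rather than on the smaller orbit $J^{\top} J = \alpha B^{-1}$ that would be needed to drive $\beta$ down to $1$, per \cref{rm:kaporin_properties}); the computations themselves are routine once the setup above is in place.
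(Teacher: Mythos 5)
Your proof is correct, and it takes a genuinely different route from the one in the paper. The paper proves the lemma by multiplicativity: writing $D = \diag_{\mathfrak{B}}(B)$, it observes that $\trace(JBJ^\top) = \trace(JDJ^\top)$ and $\det(JBJ^\top) = \det(JDJ^\top)\det(D^{-1}B)$, so that $\beta(JBJ^\top) = \beta(JDJ^\top)\,\beta(D^{-1/2}BD^{-1/2})$; the second factor is $J$-independent, and the first satisfies $\beta(JDJ^\top)\geq 1$ with equality iff $J^\top J = \alpha D^{-1}$ by \cref{rm:kaporin_properties}. Your argument instead bounds $\beta(JBJ^\top)$ directly via a blockwise matrix AM--GM followed by a weighted scalar AM--GM, and traces the two equality conditions. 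The two approaches reach the same $J$-independent lower bound $\bigl(\prod_k\det(B_{kk})/\det(B)\bigr)^{1/N}$ — your explicit expression is exactly the paper's $\beta(D^{-1/2}BD^{-1/2})$ — but the paper's factorization makes the minimization step a one-liner, whereas your route is more computational and elementary, spelling out the AM--GM machinery that \cref{rm:kaporin_properties} hides. One remark worth adding to tighten the argument: since $J\mapsto (J_{kk}^\top J_{kk})_{k}$ surjects onto $n$-tuples of s.p.d.\ matrices (e.g.\ via Cholesky), the equality locus you describe is nonempty, so the lower bound is indeed the minimum; you implicitly rely on this when converting the ``lower bound attained iff'' into ``minimized iff''.
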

\begin{proof}
	Let \(D = \diag_{\mathfrak{B}}(B)\), so that in particular, \(\trace(J B J^{\top}) = \trace(J D J^{\top})\) and also \(\trace(D^{-1} B) = N\). Since \(\det(J B J^{\top}) = \det(J D J^{\top}) \det(D^{-1} B)\), it can easily be shown that
	\begin{multline*}
		\beta(J B J^{\top}) = \frac{\trace(J B J^{\top})}{N \det(J B J^{\top})^{1/N}} = \frac{\trace(J D J^{\top})}{N \det(J D J^{\top})^{1/N}} \, \frac{N}{N \det(D^{-1} B)^{1/N}} =\\
		= \beta(J D J^{\top}) \, \beta(D^{-1/2} B D^{-1/2}).
	\end{multline*}
	The problem has thus been reduced to minimizing \(\beta(J D J^{\top})\), and the claim follows by noting that \(\beta(J D J^{\top}) \geq 1\) and that the value of 1 is attained if and only if \(J^{\top}J = \alpha D^{-1}\) for some \(\alpha > 0\) (recall \cref{rm:kaporin_properties}).
\end{proof}

We may now introduce the well-known optimality property of the FSAI preconditioner with respect to the Kaporin number (see the work of Kaporin himself \cite{Kaporin1994}).
\begin{theorem}[FSAI optimality]\label{thm:fsai_kaporin}
	Let \(A\) be a s.p.d. matrix with block structure \(\mathfrak{B}\), \(\mathcal{P}\) be some lower triangular sparsity pattern, and \(F, S\) be the block-FSAI matrices based on \(\mathcal{P}\) as in \cref{def:fsai_matrices}.
	
	Then \(G \coloneqq S^{-1/2} F\) minimizes the Kaporin number of the preconditioned matrix \(\beta(\tilde{G} A \tilde{G}^{\top})\) over all full-rank matrices \(\tilde{G}\) with sparsity pattern \(\mathcal{P}\).
\end{theorem}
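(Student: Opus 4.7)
The overall strategy is to decouple the minimization into a ``normalization'' piece handled by \cref{thm:optimal_diagonal_scaling} and a ``shape'' piece that turns out to be a block Schur-complement minimization. First, since $\mathcal{P}$ contains the diagonal and is block-lower-triangular, every admissible full-rank $\tilde{G}$ must have invertible diagonal blocks. I would therefore split it uniquely as $\tilde{G}=J\tilde{F}$ with $J\coloneqq \diag_{\mathfrak{B}}(\tilde{G})$ a full-rank block-diagonal matrix and $\tilde{F}\coloneqq J^{-1}\tilde{G}$ block-lower-triangular with $\tilde{F}_{ii}=\mathbb{I}_{m_i\times m_i}$ and $\mathcal{P}(\tilde{F})\subset\mathcal{P}$. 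Writing $B\coloneqq \tilde{F}A\tilde{F}^{\top}$ and $D\coloneqq \diag_{\mathfrak{B}}(B)$, \cref{thm:optimal_diagonal_scaling} applied to $B$ gives
\[\min_{J}\beta(JBJ^{\top})=\beta(D^{-1/2}BD^{-1/2}),\]
with the minimum attained iff $J^{\top}J=\alpha D^{-1}$ for some $\alpha>0$.

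Next, because $\tilde{F}$ is block-lower-triangular with identity diagonal blocks, $\det(\tilde{F})=1$, so $\det(B)=\det(A)$ is independent of $\tilde{F}$. Combined with $\trace(D^{-1}B)=N$, this yields
\[\beta(D^{-1/2}BD^{-1/2})=\left(\frac{\det(D)}{\det(A)}\right)^{1/N}=\left(\frac{\prod_{k=1}^{n}\det(D_{kk})}{\det(A)}\right)^{1/N},\]
so the outer problem decouples into minimizing $\det(D_{kk})$ independently for each $k$ over the free entries $x\coloneqq \tilde{F}[\{k\},\tilde{\mathcal{P}}_k]$ of the $k$-th block row of $\tilde{F}$.

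For each $k$, I would expand $D_{kk}=(\tilde{F}A\tilde{F}^{\top})_{kk}$ using only the nonzero columns of row $k$ and complete the square to obtain
\[D_{kk}=S_{kk}+(x-x_\star)\,A[\tilde{\mathcal{P}}_k,\tilde{\mathcal{P}}_k]\,(x-x_\star)^{\top},\]
where $x_\star\coloneqq -A[\{k\},\tilde{\mathcal{P}}_k]\,A[\tilde{\mathcal{P}}_k,\tilde{\mathcal{P}}_k]^{-1}$ and $S_{kk}$ is precisely the Schur complement \eqref{eq:schur_complement_fsai}. Since $A[\tilde{\mathcal{P}}_k,\tilde{\mathcal{P}}_k]$ is s.p.d.\ as a principal submatrix of $A$, the added term is positive semidefinite, so $D_{kk}\succeq S_{kk}\succ 0$ in the Loewner order, and monotonicity of $\det$ on the positive-definite cone gives $\det(D_{kk})\ge \det(S_{kk})$ with equality at $x=x_\star$. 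Comparing with \eqref{eq:fsai_f_definition}, $x_\star=F[\{k\},\tilde{\mathcal{P}}_k]$, so the joint minimum is attained at $\tilde{F}=F$, which forces $D=S$ by \cref{thm:fsai_block_diagonal}. Choosing $J=S^{-1/2}$ (so $J^{\top}J=S^{-1}$, fulfilling the optimality condition from the first paragraph with $\alpha=1$) then yields $\tilde{G}=S^{-1/2}F=G$, as claimed.

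The main obstacle I anticipate is the completion-of-squares in the third paragraph: the bookkeeping across the index sets $\{k\}$ and $\tilde{\mathcal{P}}_k$ must be carried out carefully so that the residual term after completing the square lands exactly on the Schur complement $S_{kk}$ of \eqref{eq:schur_complement_fsai}. Everything else is a direct application of \cref{thm:optimal_diagonal_scaling}, the block-triangular determinant identity, and Loewner monotonicity of $\det$.
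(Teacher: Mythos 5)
Your proof is correct, and it reaches the same destination as the paper's but via a genuinely different inner argument and with the two optimization passes in the opposite order. The paper decomposes identically ($\tilde{G}=J(I+\tilde{F})$ is your $\tilde{G}=J\tilde{F}$ with the identity diagonal absorbed), but it first fixes $J$, expands $\trace(J\tilde{F}A\tilde{F}^{\top}J^{\top})$, differentiates the $k$-th block row of $\tilde{F}$ via matrix calculus, solves the stationarity condition to recover \eqref{eq:fsai_f_definition}, and only then invokes \cref{thm:optimal_diagonal_scaling} to pick $J$. You instead apply \cref{thm:optimal_diagonal_scaling} first, obtaining the closed form $\min_J\beta(J\tilde{F}A\tilde{F}^{\top}J^{\top})=\left(\det(\diag_{\mathfrak{B}}(\tilde{F}A\tilde{F}^{\top}))/\det(A)\right)^{1/N}$, and then minimize that over $\tilde{F}$ block-row by block-row via completion of squares and Loewner monotonicity of $\det$ on the s.p.d.\ cone. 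What your route buys: it replaces the matrix-calculus stationarity argument with an algebraic identity that displays the Schur complement $S_{kk}$ explicitly and makes the strict minimality manifest (the residual $(x-x_\star)A[\tilde{\mathcal{P}}_k,\tilde{\mathcal{P}}_k](x-x_\star)^{\top}$ is p.s.d.\ and vanishes only at $x=x_\star$), sidestepping the paper's need to observe that the optimal $\tilde{F}$ is independent of $J$. It also recovers, as a byproduct, exactly the determinant-ratio formula that the paper states separately in the lemma following this theorem. What the paper's route buys: the trace expansion that it differentiates is the same quantity Janna et al.\ minimize in the adaptive construction, so their proof keeps the optimality proof and the adaptive algorithm in the same analytic frame.
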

\begin{proof}
	We proceed by characterizing the minimizers of \(\beta(\tilde{G} A \tilde{G}^{\top})\) over the considered matrices.
	
	First of all, any full-rank matrix with (lower triangular) sparsity pattern \(\mathcal{P}\) can be written as \(\tilde{G} = J(I + \tilde{F})\), for some non-singular block-diagonal matrix \(J\) and some (also lower triangular) \(\tilde{F}\) having \(\mathbb{O}_{m_i \times m_i}\) as diagonal blocks (i.e.\ being strictly lower triangular). As a result, and since \(\det(I + \tilde{F}) = 1\), we can write
	\begin{equation*}
		\beta(\tilde{G} A \tilde{G}^{\top}) = \frac{\trace\left( J (I + \tilde{F}) A (I + \tilde{F}^{\top}) J^{\top} \right)}{N \left[ \det(A)\det(J^{\top} J) \right]^{1/N}},
	\end{equation*}
	where only the numerator depends on \(\tilde{F}\). Moreover,
	\begin{equation*}
		\trace\left( J (I + \tilde{F}) A (I + \tilde{F}^{\top}) J^{\top} \right) = \sum_{k=0}^n \trace\left( ((I + \tilde{F}) A (I + \tilde{F})^{\top})_{kk}  (J^{\top}J)_{kk}\right),
	\end{equation*}
	so when minimizing \(\beta(\tilde{G} A \tilde{G}^{\top})\) with respect to \(\tilde{F}\), the non-zero entries in the \(k\)th row of \(\tilde{F}\), i.e.\ \(\tilde{F}[\{k\}, \tilde{\mathcal{P}}_k]\), must minimize
	\begin{multline*}
		\trace\left( ((I + \tilde{F}) A (I + \tilde{F})^{\top})_{kk}  (J^{\top}J)_{kk}\right) = \trace\left( A_{kk} (J^{\top}J)_{kk} \right) +\\
		+ \trace\left( A[\{k\}, \tilde{\mathcal{P}}_k] \tilde{F}^{\top}[\tilde{\mathcal{P}}_k, \{k\}] (J^{\top}J)_{kk} \right) +\\
		+ \trace\left( \tilde{F}[\{k\}, \tilde{\mathcal{P}}_k] A[\tilde{\mathcal{P}}_k, \{k\}] (J^{\top}J)_{kk} \right) +\\
        + \trace\left( \tilde{F}[\{k\}, \tilde{\mathcal{P}}_k] A[\tilde{\mathcal{P}}_k, \tilde{\mathcal{P}}_k] \tilde{F}^{\top}[\tilde{\mathcal{P}}_k, \{k\}] (J^{\top}J)_{kk} \right),
	\end{multline*}
	or equivalently
	\begin{equation*}
		2 \, \trace\left( A[\tilde{\mathcal{P}}_k, \{k\}] (J^{\top}J)_{kk} \right) + \trace\left( \tilde{F}[\{k\}, \tilde{\mathcal{P}}_k] A[\tilde{\mathcal{P}}_k, \tilde{\mathcal{P}}_k] \tilde{F}^{\top}[\tilde{\mathcal{P}}_k, \{k\}] (J^{\top}J)_{kk} \right).
	\end{equation*}
	Differentiating the above expression with respect to \(\tilde{F}[\{k\}, \tilde{\mathcal{P}}_k]\) (cf. \cite[Section~2.5]{Petersen2012}) and setting the result to 0 yields
	\begin{equation*}
		2 A[\tilde{\mathcal{P}}_k, \{k\}] (J^{\top}J)_{kk} + 2 A[\tilde{\mathcal{P}}_k, \tilde{\mathcal{P}}_k] \tilde{F}^{\top}[\tilde{\mathcal{P}}_k, \{k\}] (J^{\top}J)_{kk} = \mathbb{O}_{m_k \times m_k},
	\end{equation*}
	and since \((J^{\top}J)_{kk}\) is a s.p.d. block, we arrive at
	\begin{equation*}
		\tilde{F}[\{k\}, \tilde{\mathcal{P}}_k] = - A[\{k\}, \tilde{\mathcal{P}}_k] A[\tilde{\mathcal{P}}_k, \tilde{\mathcal{P}}_k]^{-1},
	\end{equation*}
	which means that \(I + \tilde{F}\) coincides with the FSAI matrix \(F\) from \eqref{eq:fsai_f_definition}, independently of \(J\).
	
	Having fixed \(\tilde{F}\), it remains to minimize \(\beta\left( J (F A F^{\top})  J^{\top}\right)\) with respect to \(J\). This last step is quite simple, since from \cref{thm:optimal_diagonal_scaling} it follows that
	\(J^{\top}J = \alpha \, \diag_{\mathfrak{B}}(F A F^{\top})\) for some \(\alpha > 0\), which, recalling \cref{thm:fsai_block_diagonal}, is fulfilled by \(J = S^{-1/2}\) with \(\alpha = 1\).
\end{proof}

\subsection{Adaptive block-FSAI}\label{sec:adaptive_fsai}

It remains to address the question about how to properly choose the triangular sparsity pattern \(\mathcal{P}\) for the computation of \(F\). The simplest case is to fix it a priori, e.g.\ as the triangular pattern extracted from the original matrix \(A\), or from some power of it: \(A^2\), \(A^3\), etc.

That approach, however, does not rely on the actual entries of \(A\), but only on its structure. A more involved approach is to adaptively construct a sparsity pattern relying specifically on the entries of \(A\). Following the work Janna and Ferronato \cite{Janna2011}, we will develop an adaptive algorithm with the aim of minimizing the Kaporin number of the preconditioned matrix, \(\beta(G A G^{\top})\). It will rely on the following result, also taken from \cite{Janna2011}.

\begin{lemma}
	Let \(A \in \mathbb{R}^{N\times N}\) be a s.p.d. matrix with block structure \(\mathfrak{B} = \{m_i\}_{i=1}^n\), \(\mathcal{P}\) be some lower triangular sparsity pattern, and \(F\) and \(S\) be the FSAI matrices based on \(\mathcal{P}\) as in \cref{def:fsai_matrices}. Then, for \(G \coloneqq S^{-1/2} F\), it holds that
	\begin{equation*}
		\beta(G A G^{\top}) = \left( \frac{\det(\diag_{\mathfrak{B}}(F A F^{\top}))}{\det(A)} \right)^{1/N} = \left( \frac{\prod_{i=1}^n\det((F A F^{\top})_{ii})}{\det(A)} \right)^{1/N}.
	\end{equation*}
\end{lemma}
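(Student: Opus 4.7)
The plan is to exploit the unusual normalization afforded by the preceding block-diagonal lemma so that the numerator of the Kaporin number becomes trivial, and then to reduce the determinant in the denominator to a product over the diagonal blocks of $F A F^{\top}$.

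The first step is to observe that, by \cref{thm:fsai_block_diagonal}, $\diag_{\mathfrak{B}}(G A G^{\top})$ is the identity. Consequently $\trace(G A G^{\top}) = N$, and the definition of the Kaporin number collapses to
\begin{equation*}
	\beta(G A G^{\top}) = \frac{N}{N\, \det(G A G^{\top})^{1/N}} = \det(G A G^{\top})^{-1/N}.
\end{equation*}
So the entire task reduces to evaluating $\det(G A G^{\top})$.

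For this I would write $\det(G A G^{\top}) = \det(G)^2 \det(A) = \det(S)^{-1}\det(F)^2\det(A)$, using that $G = S^{-1/2}F$ with $S$ block-diagonal and symmetric positive definite. The key (essentially the only nontrivial) step is the claim that $\det(F) = 1$. This follows because the sparsity pattern $\mathcal{P}$ is (block-)lower triangular and each diagonal block $F_{ii}$ equals $\mathbb{I}_{m_i\times m_i}$ by \eqref{eq:fsai_f_definition}; hence $F$ is a block lower triangular matrix with identity diagonal blocks, whose determinant is the product $\prod_{i=1}^n \det(\mathbb{I}_{m_i}) = 1$. Plugging this in gives
\begin{equation*}
	\det(G A G^{\top}) = \frac{\det(A)}{\det(S)},
\end{equation*}
and therefore $\beta(G A G^{\top}) = (\det(S)/\det(A))^{1/N}$.

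It only remains to recognize $S$ as $\diag_{\mathfrak{B}}(F A F^{\top})$, which is precisely the second identity proved in \cref{thm:fsai_block_diagonal}, giving the first equality of the statement. For the second equality, block-diagonality of $S$ (together with $S_{ii} = (F A F^{\top})_{ii}$) yields $\det(S) = \prod_{i=1}^n \det((FAF^{\top})_{ii})$, finishing the proof. I do not expect any real obstacle here: every step is a direct consequence of the algebraic identities established earlier, with the block-triangularity argument for $\det(F) = 1$ being the only point that deserves an explicit line.
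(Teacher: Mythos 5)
Your proof is correct and follows essentially the same route as the paper's: invoke \cref{thm:fsai_block_diagonal} to obtain $\trace(G A G^{\top}) = N$, use $\det(F) = 1$ to reduce the determinant to $\det(A)/\det(S)$, and identify $S$ with $\diag_{\mathfrak{B}}(FAF^{\top})$. You merely spell out the block-triangularity argument for $\det(F)=1$, which the paper leaves as an unremarked observation.
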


\begin{proof}
	From \cref{thm:fsai_block_diagonal} it follows that \(\trace(G A G^{\top}) = N\), and further noting that \(\det(F) = 1\),
	\begin{equation*}
		\beta(G A G^{\top}) = \frac{\trace(G A G^{\top})}{N \,\det(G A G^{\top})^{1/N}} = \left(\frac{\det(S)}{\det(A)}\right)^{1/N} = \left( \frac{\prod_{i=1}^n\det((F A F^{\top})_{ii})}{\det(A)} \right)^{1/N}.
	\end{equation*}
\end{proof}

Therefore, when enlarging the \(k\)th row of the sparsity pattern, \(\mathcal{P}_k\), we will aim at minimizing \(\det((F A F^{\top})_{kk})\). Let us now recall \cref{thm:fsai_block_diagonal} in the form
\begin{equation*}
	(FA)_{kk} = (FAF^{\top})_{kk} = A_{kk} - A[\{k\},\, \tilde{\mathcal{P}}_k] A[\tilde{\mathcal{P}}_k,\, \tilde{\mathcal{P}}_k]^{-1} A[\tilde{\mathcal{P}}_k,\, \{k\}],
\end{equation*}
and additionally note that, by construction of \(F\),
\begin{equation}\label{eq:fsai_helper}
	(FA)[\{k\},\, \tilde{\mathcal{P}}_k] = A[\{k\},\, \tilde{\mathcal{P}}_k] - A[\{k\},\, \tilde{\mathcal{P}}_k]\, A[\tilde{\mathcal{P}}_k,\, \tilde{\mathcal{P}}_k]^{-1} \, A[\tilde{\mathcal{P}}_k,\, \tilde{\mathcal{P}}_k] \equiv 0,
\end{equation}
so that \(\mathcal{P}(FA) \cap \mathcal{P} = \{(k, k)\,\colon\, k \in \{1,\dots,n\}\}\). We now consider how
\begin{equation*}
	\Delta = \det((F A F^{\top})_{kk})
\end{equation*}
would be affected if we were to add a single index \(c \in \{0,\dots,k-1\} \setminus \mathcal{P}_k\) to the \(k\)th row of the sparsity pattern.
\begin{lemma}\label{thm:fsai_adaptivity}
	Let \(A\) be a s.p.d. matrix with block structure \(\mathfrak{B} = \{m_i\}_{i=1}^n\) and \(F\) be the FSAI matrix for a given lower triangular sparsity pattern \(\mathcal{P}\). Let \(k \in \{1,\dots,n\}\) be some row index, and \(c \in \{0,\dots,k-1\} \setminus \mathcal{P}_k\) some admissible column index. If we denote \(F^{(c)}\) the FSAI matrix that would result from the expanded sparsity pattern \(\mathcal{P} \cup \{(k,c)\}\), it holds that
	\begin{equation*}
		(F^{(c)} A F^{(c)\top})_{kk} = (F A F^{\top})_{kk} - (FA)_{kc} W_c^{-1} (FA)^{\top}_{kc},
	\end{equation*}
	where
	\begin{equation*}
		W_c \coloneqq A_{cc} - A[\{c\},\, \tilde{\mathcal{P}}_k] \, A[\tilde{\mathcal{P}}_k,\, \tilde{\mathcal{P}}_k]^{-1}\, A[\tilde{\mathcal{P}}_k,\, \{c\}].
	\end{equation*}
	Additionally, if we denote \(\Delta = \det((F A F^{\top})_{kk})\) and \(\Delta^{(c)} = \det((F^{(c)} A F^{(c)\top})_{kk})\), and let \(H = FA\), the following relationship holds
	\begin{equation*}
		\frac{\Delta^{(c)}}{\Delta} = \frac{\det\left(W_c - H_{kc}^{\top} H_{kk}^{-1} H_{kc}\right)}{\det(W_c)}.
	\end{equation*}
\end{lemma}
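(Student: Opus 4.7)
The plan is to derive the block identity first, then deduce the determinant ratio from the two Schur-complement factorizations of a single 2-by-2 block matrix. The main work is really an exercise in block inversion, made slightly fiddly by having to keep track of the new index $c$ appended to $\tilde{\mathcal{P}}_k$.

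First I would introduce the shorthand $X = A[\tilde{\mathcal{P}}_k, \tilde{\mathcal{P}}_k]$, $y = A[\tilde{\mathcal{P}}_k, \{c\}]$ and $a = A[\{k\}, \tilde{\mathcal{P}}_k]$, $b = A_{kc}$, noting that symmetry of $A$ gives $A[\{c\}, \tilde{\mathcal{P}}_k] = y^{\top}$. With the enlarged row $\tilde{\mathcal{P}}_k^{(c)} = \tilde{\mathcal{P}}_k \cup \{c\}$ the relevant submatrix is
\begin{equation*}
A[\tilde{\mathcal{P}}_k^{(c)}, \tilde{\mathcal{P}}_k^{(c)}] = \begin{pmatrix} X & y \\ y^{\top} & A_{cc} \end{pmatrix},
\end{equation*}
whose Schur complement with respect to $X$ is precisely $W_c$. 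Block inversion then yields
\begin{equation*}
A[\tilde{\mathcal{P}}_k^{(c)}, \tilde{\mathcal{P}}_k^{(c)}]^{-1} = \begin{pmatrix} X^{-1} + X^{-1} y W_c^{-1} y^{\top} X^{-1} & -X^{-1} y W_c^{-1} \\ -W_c^{-1} y^{\top} X^{-1} & W_c^{-1} \end{pmatrix}.
\end{equation*}

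Substituting this into the defining identity $(F^{(c)} A F^{(c)\top})_{kk} = A_{kk} - A[\{k\}, \tilde{\mathcal{P}}_k^{(c)}]\, A[\tilde{\mathcal{P}}_k^{(c)}, \tilde{\mathcal{P}}_k^{(c)}]^{-1} A[\tilde{\mathcal{P}}_k^{(c)}, \{k\}]$ and expanding the product $(a,b)(\cdots)(a,b)^{\top}$, the cross and diagonal terms collect cleanly into
\begin{equation*}
(F^{(c)} A F^{(c)\top})_{kk} = A_{kk} - a X^{-1} a^{\top} - (b - a X^{-1} y)\, W_c^{-1}\, (b - a X^{-1} y)^{\top}.
\end{equation*}
By \cref{thm:fsai_block_diagonal} the first two terms on the right equal $(FAF^{\top})_{kk}$, and by the same calculation as in \eqref{eq:fsai_helper} but applied to $c \notin \mathcal{P}_k$ one recognises $(FA)_{kc} = b - a X^{-1} y$. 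This produces the first claimed identity.

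For the determinant ratio, the key observation is that both $(F A F^{\top})_{kk} - (FA)_{kc} W_c^{-1} (FA)_{kc}^{\top}$ and $W_c - H_{kc}^{\top} H_{kk}^{-1} H_{kc}$ appear as Schur complements of the single symmetric $2\times 2$ block matrix
\begin{equation*}
\begin{pmatrix} H_{kk} & H_{kc} \\ H_{kc}^{\top} & W_c \end{pmatrix},
\end{equation*}
where $H = FA$ and $H_{kk} = (FAF^{\top})_{kk}$. Applying Schur's determinant formula in both directions gives
\begin{equation*}
\det(H_{kk})\, \det\!\bigl(W_c - H_{kc}^{\top} H_{kk}^{-1} H_{kc}\bigr) = \det(W_c)\, \det\!\bigl(H_{kk} - H_{kc} W_c^{-1} H_{kc}^{\top}\bigr) = \det(W_c)\, \Delta^{(c)},
\end{equation*}
and rearranging yields $\Delta^{(c)}/\Delta = \det(W_c - H_{kc}^{\top} H_{kk}^{-1} H_{kc})/\det(W_c)$, as required. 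The only real obstacle is disciplined bookkeeping in the block inversion step; once the cross-terms are gathered into $(b-aX^{-1}y)W_c^{-1}(b-aX^{-1}y)^{\top}$ and identified as $(FA)_{kc} W_c^{-1} (FA)_{kc}^{\top}$, the determinant part is immediate from Schur's identity.
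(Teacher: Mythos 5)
Your proof is correct and takes essentially the same route as the paper: the first identity is obtained by block-inverting $A[\tilde{\mathcal{P}}_k^{(c)},\tilde{\mathcal{P}}_k^{(c)}]$ in terms of the Schur complement $W_c$ and regrouping cross terms into $(FA)_{kc}W_c^{-1}(FA)_{kc}^{\top}$, and the determinant ratio follows from applying the Schur determinant identity in both directions to a single two-by-two block matrix; the paper merely leaves the ``otherwise tedious process'' unexpanded where you have carried it out explicitly. The only cosmetic difference is that you append $c$ as the last block of the enlarged index set $\tilde{\mathcal{P}}_k^{(c)}$ whereas the paper prepends it, which swaps the $2\times 2$ block layout but leaves $W_c$ and the algebra unchanged.
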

\begin{proof}
	For the first claim, we merely describe the otherwise tedious process for arriving at the desired expression. First we write
	\begin{multline*}
		(F^{(c)} A F^{(c)\top})_{kk} =\\
        = A_{kk} -
		\begin{pmatrix}
			A_{kc} & A[\{k\},\, \tilde{\mathcal{P}}_k]
		\end{pmatrix} \begin{pmatrix}
			A_{cc} & A[\{c\},\, \tilde{\mathcal{P}}_k] \\
			A[\tilde{\mathcal{P}}_k,\, \{c\}] & A[\tilde{\mathcal{P}}_k,\, \tilde{\mathcal{P}}_k]
		\end{pmatrix}^{-1} \begin{pmatrix}
			A_{ck} \\ A[\tilde{\mathcal{P}}_k, \{k\}]
		\end{pmatrix},
	\end{multline*}
	and then we expand the inverse in terms of the Schur complement matrix \(W_c\). The desired expression can be obtained by properly gathering all terms, while also taking into account that
	\begin{equation*}
		(FA)_{kc} = A_{kc} - A[\{k\},\, \tilde{\mathcal{P}}_k] \, A[\tilde{\mathcal{P}}_k,\, \tilde{\mathcal{P}}_k]^{-1} A[\tilde{\mathcal{P}}_k, \{c\}]\,.
	\end{equation*}
	The result for \(\Delta^{(c)} / \Delta\) follows by the relation of determinants of Schur complement matrices with those of the original matrix, and \(H_{kk} = (FA)_{kk} = (FAF^{\top})_{kk}\).
\end{proof}

This last result thus provides a way to compute \(\Delta^{(c)} / \Delta\) without constructing the new \(k\)th row of the matrix \(F^{(c)}\). One further trivial consequence is that we may ignore column indices \(c\) for which \((FA)_{kc} \equiv 0\). At this point, we are able to present our simple adaptive block-FSAI algorithm in \cref{alg:adaptive_fsai}.

\begin{algorithm}[!htb]
	\caption{Adaptive block-FSAI}\label{alg:adaptive_fsai}
	\begin{algorithmic}[1]
		\Require{A s.p.d. matrix \(A\) with the block structure \(\mathfrak{B} = \{m_i\}_{i=0}^{n}\), and an initial lower triangular sparsity pattern \(\mathcal{P}_0 \subset \{1,\dots,n\}^2\) on said block structure, containing all diagonal pairs of indices. The number of adaptive steps \(t_{max} \geq 1\) and a threshold parameter \(\tau \in (0, 1]\).}
		
		\vspace{1em}
		
		\State{Initialize \(\mathcal{P} = \mathcal{P}_0\).}
		\State{Initialize \(\mathcal{R} = \{1,\dots,n\}\) to be the set of row indices which may still be adapted.}
		
		\vspace{1em}

		\For{\(t=1,\dots,t_{max}\)}
		
		\State{Compute the FSAI matrix \(F\) based on \(\mathcal{P}\), according to \eqref{eq:fsai_f_definition}.}
		
		\Comment{This requires computing \(A[\tilde{\mathcal{P}}_k,\, \tilde{\mathcal{P}}_k]^{-1}\) for every \(k\in \mathcal{R}\), which we may store.}
		
		\vspace{1em}
		
		\State{Compute \(H = F A\).}
		
		\Comment{For the last two steps, we may skip finished rows \(k \not\in \mathcal{R}\).}
		
		\vspace{1em}
		
		\For{\(k \in\mathcal{R}\)}
		
		\State{Let \(\mathcal{C} \coloneqq \{c \in \{0,\dots,k-1\} \, \colon \, H_{kc} \not\equiv 0\}\) be the set of admissible column indices.}
		
		\Comment{From \eqref{eq:fsai_helper}, \(\mathcal{C} \cap \mathcal{P}_k = \emptyset\), i.e.\ no index already in the row pattern is admissible.}
		
		\vspace{1em}
		
		\If{\(\mathcal{C} = \emptyset\)}
		\State{Remove \(k\) from \(\mathcal{R}\) and continue.}
		\EndIf
		
		\vspace{1em}
		
		\State{Let \(W_c \coloneqq A_{cc} - A[\{c\},\, \tilde{\mathcal{P}}_k] \, A[\tilde{\mathcal{P}}_k,\, \tilde{\mathcal{P}}_k]^{-1}\, A[\tilde{\mathcal{P}}_k,\, \{c\}]\) for \(c \in \mathcal{C}\), and find
		\begin{equation*}
			c^{\star} \in \argmin_{c \in \mathcal{C}} \rho_c, \quad \rho_c \coloneqq \frac{\det(W_c - H_{kc}^{\top} H_{kk}^{-1} H_{kc})}{\det(W_c)}.
		\end{equation*}}

		\Comment{We may reuse \(A[\tilde{\mathcal{P}}_k,\, \tilde{\mathcal{P}}_k]^{-1}\), computed at the latest assembly of \(F\).}
		
		\vspace{1em}
		
		\If{\(\rho_{c^{\star}} < \tau\)}
		\State{\(\mathcal{P} \gets \mathcal{P} \cup \{(k, \, c^{\star})\}\).}
		\Else
		\State{\(\mathcal{R} \gets \mathcal{R} \setminus \{k\}\).}
		\EndIf
		
		\vspace{1em}
		
		\EndFor
		\EndFor
		
		\vspace{1em}
		
		\State{Compute the FSAI matrix \(F\) based on the current state of \(\mathcal{P}\), according to \eqref{eq:fsai_f_definition}.}
		
		\Comment{As above, it is unnecessary to recompute finished rows \(k \not\in \mathcal{R}\).}

        \vspace{1em}

		\Return{\(F\)}
	\end{algorithmic}
\end{algorithm}

Compared to the algorithm from \cite{Janna2011}, ours does not use gradient descent on the minimization target (since the problem of constructing the sparsity pattern is purely discrete), relies only on a single threshold parameter and a single number of adaptive steps, and does not allow backtracking by removing pairs of indices from the sparsity pattern. Hence our claim for ``simplicity'', even at the cost of flexibility. Note that, since we add at most one new column index per row at each adaptive step, the \(t_{max}\) parameter determines the maximum number of off-diagonal blocks in any row of the resulting \(F\).

For a similar analysis to ours above we refer to M. Sedlacek's PhD thesis \cite[Appendix~C]{Sedlacek2012}.

\subsection{Nested FSAI}\label{sec:recursive_fsai}

\cref{thm:fsai_block_diagonal} allows us to reinterpret FSAI preconditioning as applying the basis transformation \(F\), followed by a single step of the block-Jacobi iteration for the transformed matrix \(F A F^{\top}\), and finally a basis transformation back to the original space \(F^{\top}\). In other words, the action of the preconditioner \(F^{\top} S^{-1} F\) can be understood as
\begin{equation*}
	r \longmapsto v = Fr \longmapsto w = \diag_{\mathfrak{B}}(F A F^{\top})^{-1} v \longmapsto p = F^{\top} w.
\end{equation*}
This interpretation suggests one further twist: instead of the block-Jacobi step for the FSAI-transformed matrix \(F A F^{\top}\), we can apply any other (preferably s.p.d.) preconditioner, and in particular, an additional step of FSAI, this one constructed for the matrix \(F A F^{\top}\) (contrary to the outer one, which is constructed simply for \(A\)). In the literature, this has been called first \emph{recurrent} FSAI \cite{Janna2015}, and recently \emph{nested} FSAI \cite{Janna2025}. Ideally, one can concatenate any number of embeddings, i.e.
\begin{equation*}
	A_0 \coloneqq A, \quad A_k = F_{k-1} A_{k-1} F_{k-1}^{\top}, \quad k = 1, \dots, n_{\ell},
\end{equation*}
where \(F_k\) is the FSAI matrix for \(A_k\) (based on some sparsity pattern). At the final level \(n_{\ell}\), a standard block-Jacobi step is performed. The matrix \(M\) of the resulting preconditioner can be written as
\begin{equation*}
	M = \hat{F}^{\top} \hat{S}^{-1} \hat{F}, \quad \hat{F} \coloneqq F_{n_{\ell}} F_{n_{\ell}-1} \cdots F_0, \quad \hat{S} \coloneqq \diag_{\mathfrak{B}}(A_{n_{\ell}}) = \diag_{\mathfrak{B}}(\hat{F} A \hat{F}^{\top}).
\end{equation*}

Nesting thus allows us to increase the density of the preconditioning matrix \(M\) while avoiding the computational cost of doing it with a single FSAI matrix \(F\), which would require a sufficiently dense sparsity pattern. A higher density of \(M\) is achieved by combining multiple \(F\) matrices of limited density, each of which can be computed adaptively. The cost is thus transferred to computing each triple matrix product \(A_{k+1} = F_k A_k F_k^{\top}\), which is required to construct the subsequent \(F_{k+1}\).

\section{The Chebyshev iteration}\label{sec:chebyshev_iteration}

Let us now look at a completely different preconditioner, based on a polynomial iteration. Polynomial iterations are iterative methods for the approximation of a solution to a linear system \(Ax = b\), such that the error \(e_k \coloneqq x - x_k\) evolves as
\begin{equation*}
	e_k = p_k(A) e_0, \quad k \geq 1,
\end{equation*}
where \(e_0\) is the error of the initial solution \(x_0\), and \(p_k\) is some polynomial of degree \(k\) with \(p_k(0) = 1\). This is equivalent to having
\begin{equation}\label{eq:polynomial_iteration_x}
	x_k = x_0 + q_{k-1}(A) (b - A x_0), \quad k \geq 1,
\end{equation}
where \(p_k(t) = 1 - t q_{k-1}(t)\). Additionally, \(p_k(A)\) controls the residual evolution, i.e.\ \(r_k = p_k(A) r_0\), where \(r_k = b - A x_k\). Under the assumption that \(A\) is a s.p.d. matrix, we have that,
\begin{equation*}
	\frac{\norm{e_k}_2}{\norm{e_0}_2} \leq \max_{\lambda\in \sigma(A)} |p_k(\lambda)| \leq \max_{\lambda\in [\alpha,\, \beta]} |p_k(\lambda)|,
\end{equation*}
for an interval \([\alpha,\, \beta] \supset \sigma(A)\). The unique solution to the minimax problem
\begin{equation}\label{eq:chebyshev_minimization}
	\min_{\substack{p_k \in \mathbb{P}_k\\ p_k(0) = 1}} \max_{\lambda\in [\alpha,\, \beta]} |p_k(\lambda)|, \quad \beta > \alpha > 0,
\end{equation}
where \(\mathbb{P}_k\) is the set of polynomials of degree \(k\), is the reparametrized and rescaled Chebyshev polynomial of degree \(k\)
\begin{equation}\label{eq:chebyshev_1st_kind}
	\hat{T}_k(\zeta) \coloneqq \left[T_k\left(\frac{\alpha + \beta}{\alpha - \beta}\right)\right]^{-1} T_k\left(\frac{2\zeta - \alpha - \beta}{\beta - \alpha}\right),
\end{equation}
where \(T_k\) is the standard Chebyshev polynomial (of the first kind), based on the \([-1, 1]\) interval, and thus \(\hat{T}_k\) is reparametrized to the \([\alpha,\, \beta]\) interval and rescaled so that \(\hat{T}_k(0) = 1\). Let us introduce, in as simple a manner as possible, a minimal set of concepts to understand the proof.

\begin{definition}
	Let \([a, b] \subset \mathbb{R}\). We say that a set of functions \(\{g_i\}_{i=1}^n\), \(g_i \in C([a, b])\), is a \emph{Chebyshev system} (or a \emph{Haar system}) if every non-zero element in \(\spn\langle g_i \rangle_{i=1}^n\) has at most \(n - 1\) distinct roots in \([a, b]\).
\end{definition}

\begin{theorem}[Chebyshev's Alternation Theorem]\label{thm:alternation}
	Let \([a, b] \subset \mathbb{R}\), \(f \in C([a, b])\), and \(\{g_i\}_{i=1}^n\), \(g_i \in C([a, b])\), be a Chebyshev system. Then \(\hat{g} \in G \coloneqq \spn\langle g_i \rangle_{i=1}^n\) is a solution to the minimax problem
	\begin{equation*}
		\min_{g \in G} \max_{t\in [a,b]} |f(t) - g(t)|,
	\end{equation*}
	if and only if there exist \(n + 1\) points \(a \leq t_0 < \cdots < t_n \leq b\) satisfying
	\begin{align*}
		|r(t_i)| &= \max_{t\in [a,b]} |r(t)|, \quad i = 0,\dots,n,\\
		r(t_i) &= -r(t_{i-1}), \quad i = 1,\dots,n,
	\end{align*}
	for \(r(t) = f(t) - \hat{g}(t)\). The points \(\{t_i\}_{i=1}^n\) are called ``alternation points'' of \(r\) in \([a, b]\). Furthermore, if a minimizer exists, then it is unique.
\end{theorem}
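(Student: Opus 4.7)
The plan is to prove the theorem in the standard three stages: sufficiency of alternation, necessity of alternation, and uniqueness. Throughout, let $E \coloneqq \min_{g \in G} \max_{t \in [a,b]} |f(t) - g(t)|$ and $r(t) = f(t) - \hat g(t)$. The Chebyshev/Haar property, which limits the number of roots of any non-zero element of $G$ to at most $n-1$, will do all of the heavy lifting.

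For sufficiency, I would assume the existence of $n+1$ alternation points $t_0 < \cdots < t_n$ with $|r(t_i)| = \max_{[a,b]} |r|$ and $r(t_i) = -r(t_{i-1})$, and argue by contradiction. Suppose some $g \in G$ with $g \neq \hat g$ satisfied $\max_{[a,b]}|f - g| < \max_{[a,b]}|r|$. Writing $\hat g - g = (f - g) - r$, at each alternation point $t_i$ the second term dominates, so $(\hat g - g)(t_i)$ inherits the alternating sign pattern of $-r(t_i)$. By the intermediate value theorem this forces $\hat g - g \in G \setminus \{0\}$ to have at least $n$ sign changes, hence at least $n$ roots in $[a,b]$, contradicting the Chebyshev property.

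For necessity, I would again argue by contradiction: assume $\hat g$ is a minimizer but the residual $r$ has only $m+1 \le n$ equi-oscillation points $t_0 < \cdots < t_m$ (maximally $m+1$ extrema with alternating sign). Partition $[a,b]$ by points $\xi_0 = a < \xi_1 < \cdots < \xi_m = b$ inserted between consecutive sign-change regions, so that on each closed subinterval $r$ achieves $+\max|r|$ or $-\max|r|$, but not both. Using the Haar property on the $m+1 \le n$ subintervals, I would construct a non-zero $h \in G$ whose sign on each subinterval matches that of $r$ there (this is precisely where the Chebyshev system assumption is crucial: it guarantees, via determinantal interpolation conditions on $m$ chosen points, that such an $h$ exists). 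Then for a sufficiently small $\varepsilon > 0$ the function $\hat g + \varepsilon h$ strictly reduces the maximum of $|f - \hat g - \varepsilon h|$: on neighborhoods of the extrema of $r$ the residual drops by a positive amount, while away from them the original residual was already strictly below $\max|r|$ and a small perturbation stays below. This contradicts the minimality of $\hat g$.

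Uniqueness follows from a short convexity argument: if $\hat g_1, \hat g_2$ both achieve the minimum $E$, then $\hat g = \tfrac12(\hat g_1 + \hat g_2)$ also does, and at any alternation point $t_i$ of its residual we have $E = |f - \hat g|(t_i) \le \tfrac12|f - \hat g_1|(t_i) + \tfrac12|f - \hat g_2|(t_i) \le E$, forcing equality, same sign, and therefore $(f - \hat g_1)(t_i) = (f - \hat g_2)(t_i)$ at all $n+1$ such points; hence $\hat g_1 - \hat g_2 \in G$ has $n+1$ roots, which by the Haar property forces $\hat g_1 = \hat g_2$. The main obstacle, and the only place where the argument is not bookkeeping, is the necessity direction: specifically, turning the assumption of \emph{fewer} than $n+1$ alternation points into an explicit perturbation $h \in G$ with the correct sign pattern on each subinterval. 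This is where the full strength of the Chebyshev system hypothesis is invoked, and it is worth stating carefully even though I would not grind through the determinantal construction in detail.
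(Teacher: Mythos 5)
The paper itself does not prove this theorem; the ``proof'' is a one-line pointer to the Alternation Theorem and the Unicity Theorem in Cheney's book (and to Alimov--Tsar'kov for a modern survey), so there is no in-paper argument to compare against. Your sketch is the standard proof that those references give, and each of your three stages is sound as stated: sufficiency by observing that $\hat g - g = (f-g) - r$ inherits the alternating sign pattern of $-r$ at the $n+1$ alternation points and so acquires $n$ roots; necessity by constructing a perturbation $h \in G$ whose sign matches that of $r$ on each of the $m+1 \le n$ extremal subintervals and taking $\hat g + \varepsilon h$; uniqueness by the midpoint/convexity argument forcing equality in the triangle inequality at all $n+1$ alternation points of the midpoint's residual, so that $\hat g_1 - \hat g_2$ vanishes at $n+1$ points and the Haar property kills it. You also correctly identify the only technically delicate step, namely building the perturbation $h$: it is not enough to have some nonzero element of $G$ vanishing at the $m$ separator points $\xi_i$; one also needs $h$ to change sign at each $\xi_i$, have no further zeros, and carry the right sign on each subinterval, and this is precisely where the full strength of the Haar hypothesis (via a determinantal construction) is used. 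With that caveat acknowledged, as you do, the sketch is correct and matches the cited references' approach.
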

\begin{proof}
	This is a combination of the Alternation Theorem and the Unicity Theorem in \cite{Cheney1998}. For a more recent version, we refer to \cite{Alimov2021}.
\end{proof}

\begin{lemma}
	For \(n > 0\), the Chebyshev polynomial \(T_n\), defined in the \([-1,1]\) interval by
	\begin{equation*}
		T_n(t) = \cos n\theta, \quad t = \cos \theta \in [-1,1],
	\end{equation*}	
	has the \(n + 1\) alternation points
	\begin{equation*}
		t_i = \cos \frac{i\pi}{n}, \quad i = 0,\dots,n.
	\end{equation*}
\end{lemma}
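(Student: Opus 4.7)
My plan is to verify the two defining properties of alternation points by direct computation, exploiting the parametrization \(t = \cos\theta\) in which the Chebyshev polynomial takes the particularly transparent form \(T_n(t) = \cos n\theta\).

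First I would evaluate \(T_n\) at the proposed points. Setting \(\theta_i = i\pi/n\), we have \(t_i = \cos\theta_i\) and therefore
\begin{equation*}
    T_n(t_i) = \cos(n\theta_i) = \cos(i\pi) = (-1)^i, \quad i = 0,\dots,n.
\end{equation*}
Next I would establish the maximality property: since every \(t \in [-1,1]\) is of the form \(\cos\theta\) for some \(\theta \in [0,\pi]\), we have \(|T_n(t)| = |\cos n\theta| \le 1\), so \(\max_{t\in[-1,1]}|T_n(t)| = 1\), which is attained at each \(t_i\) by the calculation above. The alternation \(T_n(t_i) = -T_n(t_{i-1})\) then follows immediately from \((-1)^i = -(-1)^{i-1}\).

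The only mildly subtle point is the ordering. The map \(i \mapsto t_i = \cos(i\pi/n)\) is strictly decreasing on \(\{0,\dots,n\}\), whereas Chebyshev's Alternation Theorem (\cref{thm:alternation}) was stated with strictly increasing points \(a \le t_0 < \cdots < t_n \le b\). I would address this by remarking that the alternation and maximality conditions are invariant under reversing the indexing \(i \mapsto n - i\), so the set \(\{t_i\}_{i=0}^n\) equally forms a valid tuple of alternation points once relabeled in increasing order; the sign pattern simply shifts by an overall factor which does not affect the alternating structure.

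There is no genuinely hard step here — the proof is essentially an evaluation together with the trivial bound \(|\cos x| \le 1\). The only thing to watch is the bookkeeping around the ordering convention and the sign of \(T_n(t_i)\), both of which are handled by the observations above.
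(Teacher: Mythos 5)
Your proof is correct and follows exactly the approach the paper alludes to: the paper's proof is simply ``Trivial given the trigonometric definition'' with a pointer to Mason--Handscomb, and you have spelled out precisely that trigonometric computation, including the (mild) bookkeeping about the decreasing order of \(t_i = \cos(i\pi/n)\) versus the increasing convention in \cref{thm:alternation}.
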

\begin{proof}
	Trivial given the trigonometric definition. For more details, we refer to \cite{Mason2002}.
\end{proof}

Usually, instead of for polynomials with a pointwise constraint, the minimax property of Chebyshev polynomials is stated for monic polynomials \cite{Mason2002, Mason2005}, which is easily proved using the Alternation Theorem to show that a minimizer of
\begin{equation*}
	\min_{q_{k-1} \in \mathbb{P}_{k-1}} \max_{t\in [-1, 1]} |t^k - q_{k-1}(t)|
\end{equation*}
satisfies \(t^k - q_{k-1}(t) \propto T_k(t)\), and hence the properly rescaled Chebyshev polynomial is the one least deviating from 0 in the \(L^{\infty}([-1,1])\)-norm among all monic polynomials. For the pointwise constraint \(p_k(\xi) = 1\) with \(\xi\in\mathbb{R}\), one simply needs to restate the minimax problem as
\begin{equation*}
	\min_{q_{k-1} \in \mathbb{P}_{k-1}} \max_{t\in [-1, 1]} |1 - (t - \xi)\, q_{k-1}(t)|,
\end{equation*}
which in this case introduces a caveat: in order to apply the Alternation Theorem, \(\{(t - \xi)\, t^i\}_{i=0}^{k-1}\) has to be a Chebyshev system in \(C([-1,1])\), which means that for any nonzero \(q \in \mathbb{P}_{k-1}\), \((t - \xi)\, q(t)\) may have at most \(k-1\) roots in \([-1, 1]\), which holds if and only if \(\xi \notin [-1, 1]\). This is the reason behind the requirement that \(\alpha > 0\) in \eqref{eq:chebyshev_minimization}.

Now that we have properly justified the use of Chebyshev polynomials for iterative methods, we need to find an appropriate construction of the iteration \eqref{eq:polynomial_iteration_x}. Following the recursive rule of Chebyshev polynomials
\begin{align*}
	T_0(t) &= 1,\\
	T_1(t) &= t,\\
	T_{n+1}(t) &= 2t\, T_n(t) - T_{n-1}(t), \quad n \geq 1,
\end{align*}
one can write the resulting linear iteration for the system \(Ax = b\) in a variety of different ways, as thoroughly discussed by Gutknecht and Röllin \cite{Gutknecht2002}. Based on their two-term iteration, we can write the Chebyshev iteration as in \cref{alg:chebyshev_iteration}.

\begin{algorithm}[!htb]
	\caption{Chebyshev iteration of degree \(k\)}\label{alg:chebyshev_iteration}
	\begin{algorithmic}[1]
		\Require{A linear system \(A x = b\) with a s.p.d. matrix \(A\). An initial tentative solution \(x_0\). An interval \([\alpha,\, \beta] \supset \sigma(A)\) with \(0 < \alpha < \beta\). A positive integer \(k \geq 1\).}
		
		\vspace{1em}
		
		\State{Rewrite \([\alpha,\, \beta] = [c-d,\, c+d]\), i.e.\ set}
		\begin{equation*}
			c = \frac{\alpha + \beta}{2}, \quad d = \frac{\beta - \alpha}{2}.
		\end{equation*}
		
		\State{Initialize \(r = b - Ax_0\), \(p = r\).}
		\State{Initialize $\omega = \frac{1}{c}$, $\psi = \frac{1}{2} ( d \omega)^2$.}
		
		\vspace{1em}
		
		\State{Initialize \(x = x_0 + \omega \, p\)}
		
		\vspace{1em}
		
		\For{\(i=2,\dots,k\)}
		
		\State{\(r \gets b - Ax\)}
		\State{\(p \gets r + \psi \, p\)}
		\State{\(\omega \gets \left(c - \frac{\psi}{\omega} \right)^{-1}\)}
		\State{\(\psi \gets \left( \frac{d \omega}{2} \right)^2\)}
		
		\vspace{1em}
		
		\State{\(x \gets x + \omega \, p\)}
		
		\EndFor
		
		\vspace{1em}
		
		\Return{\(x\)}
	\end{algorithmic}
\end{algorithm}

Additionally, the Chebyshev iteration naturally lends itself to preconditioning with a s.p.d. matrix \(M\), in which case the interval \([\alpha,\, \beta]\) should contain \(\sigma(MA)\) instead of \(\sigma(A)\). The inclusion of preconditioning in \cref{alg:chebyshev_iteration} is trivial: we simply replace \(r\) by \(Mr\) in the initialization as well as in the update of \(p\).

Although estimating the maximal eigenvalue of a s.p.d. matrix \(A\) (or a preconditioned \(MA\), with \(M\) also s.p.d.) is not a complicated task, the same cannot be said about estimating its minimal eigenvalue. Therefore, techniques have been devised to perform the Chebyshev iteration without actually estimating the minimal eigenvalue (see e.g.\ the LS-Chebyshev polynomial preconditioner from \cite{Ashby1992}). This issue will also play a role in the forthcoming subsection.

\subsection{Chebyshev smoothing}\label{sec:chebyshev_smoothing}

The Chebyshev iteration is also a Krylov space method, but contrary to the usual Krylov space methods (e.g.\ CG, GMRES), the absence of inner products in its iteration makes it particularly well-suited to be used not as a solver itself, but (for a fixed, predefined number of iterations) as a preconditioner within another solver (e.g.\ CG). Note that we presented \cref{alg:chebyshev_iteration} already in this spirit by fixing the number of iterations.

Furthermore, the Chebyshev iteration can be repurposed as a smoother for a multilevel iteration by targeting only the higher end of the spectrum of \(MA\), as initially performed by Adams et al. \cite{Adams2003}, and later by Baker et al. \cite{Baker2011}. This means that, instead of relying on an interval \([\alpha,\, \beta] \supset \sigma(MA)\), which would require estimates of both the maximal and minimal eigenvalues of \(MA\), one may cover only a right-subinterval of \(\sigma(A)\), e.g.\ in the form of \([\rho \, \lambda_{max}(MA), \, \lambda_{max}(MA)]\) for some \(\rho \in (0,1)\).

In this scenario, the usual smoothing stage consisting of \(k\) repeated applications of a preconditioner, i.e.\ \(S = (I - MA)^k\), also known as Richardson smoothing, can be replaced by a single application of the \(M\)-preconditioned Chebyshev iteration of degree \(k\), i.e.\ \(S = p_k(MA)\) (since it also involves \(k\) matrix-vector products with \(MA\)).

Recently, Lottes \cite{Lottes2023} has sharply reconsidered whether the minimax property \eqref{eq:chebyshev_minimization} satisfied by Chebyshev polynomials of the first kind is still meaningful when using the Chebyshev iteration to concatenate smoothing steps within a multilevel iteration. Following the splitting of the two-level error propagation leading to the Approximation Property and the Smoothing Property (see e.g.\ Equation (6.1.5) from Hackbusch's monograph \cite{Hackbusch1985}), and defining the norm \(\norm{\cdot}^2_{M,A}\) as
\begin{equation*}
	\norm{B}^2_{M,A} \coloneqq \sup_{\norm{v}_A \leq 1} \norm{Bv}^2_M, \quad \norm{w}^2_Q \coloneqq \langle w,\, w \rangle_Q \coloneqq \langle Q w,\, w \rangle,
\end{equation*}
we can formulate the Smoothing Property for a smoother \(S = p_k(MA)\) with respect to this norm and bound it as
\begin{equation}\label{eq:chebyshev_smoothing_bound}
	\norm{A S}^2_{M,A} = \sup_{\norm{v}_A \leq 1} \norm{A S v}^2_{M} = \sup_{\norm{v}_A \leq 1} \norm{A p_k(MA) v}^2_{M} \leq \max_{\lambda \in \sigma(MA)} \lambda p_k(\lambda)^2,
\end{equation}
where the last step follows by writing the test vectors \(v\) in an \(\langle \cdot,\cdot \rangle_A\)-orthonormal basis of eigenvectors of \(MA\) (and thus also of \(p_k(MA)\)). This points us towards the minimax problem
\begin{equation}\label{eq:chebyshev_smoother_minimization}
	\min_{\substack{p_k \in \mathbb{P}_k\\ p_k(0) = 1}} \max_{\lambda\in [\alpha,\, \beta]} \lambda^{1/2} |p_k(\lambda)|, \quad \beta > \alpha > 0,
\end{equation}
for an interval \([\alpha,\, \beta] \supset \sigma(MA)\). The following lemma gives us the solution.

\begin{lemma}\label{thm:weighted_minimax}
	Let \(k \geq 1\) be a positive integer. There exists \(\delta^{\star} \in (-1, 1)\) such that, for any \(\delta \in (-1, \delta^{\star})\), the minimax problem
	\begin{equation}\label{eq:weighted_minimax_lemma}
		\min_{\substack{p_k \in \mathbb{P}_k\\ p_k(-1) = 1}} \max_{t\in [\delta,\, 1]} (1 + t)^{1/2} |p_k(t)|,
	\end{equation}
	has a unique minimizer, which is given by
	\begin{equation*}
		\hat{p}_k(t) = \frac{1}{2k+1}\, W_k\left( -t \right),
	\end{equation*}
	where \(W_k\) is the \(k\)th-degree Chebyshev polynomial \emph{of the fourth kind}. Furthermore, this also holds for the limit case \(\delta = -1\), i.e.\ \(\hat{p}_k\) is also a solution to
	\begin{equation*}
		\min_{\substack{p_k \in \mathbb{P}_k\\ p_k(-1) = 1}} \max_{t\in [-1,\, 1]} (1 + t)^{1/2} |p_k(t)|.
	\end{equation*}
\end{lemma}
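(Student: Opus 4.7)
The plan is to recast the constrained minimax problem into classical best-approximation form and invoke Chebyshev's Alternation Theorem (\cref{thm:alternation}). Since the constraint $p_k(-1) = 1$ is equivalent to the factorisation $p_k(t) = 1 + (t+1)\,q_{k-1}(t)$ with $q_{k-1}\in\mathbb{P}_{k-1}$, the objective becomes $\max_{t\in[\delta,1]}\bigl|(1+t)^{1/2} + (1+t)^{3/2} q_{k-1}(t)\bigr|$, i.e.\ the best uniform approximation of $f(t) = -(1+t)^{1/2}$ from $G = \spn\langle (1+t)^{3/2} t^i\rangle_{i=0}^{k-1}$. Since $(1+t)^{3/2} > 0$ on $[\delta, 1]$ whenever $\delta > -1$, any nonzero element of $G$ has at most $k-1$ zeros in $[\delta, 1]$, so $\{(1+t)^{3/2} t^i\}_{i=0}^{k-1}$ is a Chebyshev system of size $k$, and \cref{thm:alternation} guarantees a unique minimiser, characterised by its residual equioscillating at $k+1$ points.

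Next I would verify that the proposed candidate $\hat{p}_k(t) = \tfrac{1}{2k+1}\,W_k(-t)$ realises such an equioscillation explicitly. Using the trigonometric form $W_k(\cos\theta) = \sin\bigl((k+\tfrac{1}{2})\theta\bigr)/\sin(\theta/2)$, the substitution $-t = \cos\theta$ with $\theta\in[0,\pi]$, and the identity $(1+t)^{1/2} = \sqrt{2}\,\sin(\theta/2)$, the weighted residual collapses to
\[
(1+t)^{1/2}\,\hat{p}_k(t) \;=\; \frac{\sqrt{2}}{2k+1}\,\sin\bigl((k+\tfrac{1}{2})\theta\bigr),
\]
a clean sinusoid in $\theta$. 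Its peaks of magnitude $\tfrac{\sqrt{2}}{2k+1}$ occur at $\theta_j = (2j+1)\pi/(2k+1)$ for $j = 0,\dots,k$, with sign alternating in $j$, and pull back to $k+1$ points $t_j = -\cos\theta_j$ lying in $\bigl[-\cos(\pi/(2k+1)),\,1\bigr]$, the rightmost being $t_k = 1$. Setting $\delta^{\star} \coloneqq -\cos(\pi/(2k+1))$, for any $\delta\in(-1,\delta^{\star})$ the interval $[\delta, 1]$ contains all $k+1$ alternation points and the supremum of $(1+t)^{1/2}|\hat{p}_k(t)|$ over $[\delta,1]$ equals $\tfrac{\sqrt{2}}{2k+1}$ (the extremum at $t_k = 1$ is always present). \cref{thm:alternation} then pins down $\hat{p}_k$ as the unique minimiser.

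The limit case $\delta = -1$ requires a separate argument, because at $t = -1$ the factor $(1+t)^{3/2}$ vanishes and the Chebyshev-system property is lost, so uniqueness via the Alternation Theorem breaks down. The plan is to conclude by monotonicity of the supremum in $\delta$: any admissible $p_k$ with $\sup_{t\in[-1,1]}(1+t)^{1/2}|p_k(t)| < \tfrac{\sqrt{2}}{2k+1}$ would satisfy the same bound on $[\delta,1]$ for every $\delta\in(-1,\delta^{\star})$, contradicting the already-established optimality of $\hat{p}_k$ there. The main conceptual obstacle I anticipate is the correct identification of $\delta^{\star}$ as the leftmost extremum $-\cos(\pi/(2k+1))$ of the sinusoidal residual: this is exactly the threshold at which the equioscillation pattern ceases to fit inside the working interval, and everything else (the trigonometric collapse and the Chebyshev-system check) amounts to bookkeeping.
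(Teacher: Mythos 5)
Your proposal is correct and takes essentially the same route as the paper: reduce the constrained minimax problem to best uniform approximation from the span of $\{(1+t)^{3/2}t^i\}_{i=0}^{k-1}$, check the Haar-system property on $[\delta,1]$, invoke the Alternation Theorem, and then exhibit the equioscillation of the candidate explicitly, with the limit case handled by a monotonicity/restriction argument. The only cosmetic difference is that the paper routes through the third-kind polynomial $V_k$ and the substitution $t=\cos\theta$ (getting $\sqrt{2}\cos\bigl((k+\tfrac12)\theta\bigr)$), whereas you work directly with $W_k$ and $-t=\cos\theta$ (getting $\tfrac{\sqrt{2}}{2k+1}\sin\bigl((k+\tfrac12)\theta\bigr)$); both give $\delta^{\star}=-\cos\!\frac{\pi}{2k+1}$, and your version is arguably a little cleaner since it avoids the identity $V_k(t)=(-1)^kW_k(-t)$. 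Your $\delta=-1$ argument is also slightly more transparent than the paper's wording: you observe directly that any competitor must already beat $\hat p_k$ on $[\delta,1]$, which is impossible, and that $\hat p_k$'s sup over $[-1,1]$ equals its sup over $[\delta,1]$ because the extrema of the sinusoid all lie in $[\delta^\star,1]$ and its modulus never exceeds $\tfrac{\sqrt{2}}{2k+1}$.
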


\begin{proof}
	Let us first note that, for \(V_k\) the \(k\)th-degree Chebyshev polynomial of the \emph{third} kind (see \cite{Mason2002, Mason2005}), the weighted polynomial \((1 + t)^{1/2} V_k(t)\) has \(k + 1\) alternation points \(t_i = \cos \theta_i\), \(i = 0, \dots, k\) with
	\begin{equation*}
		\theta_i = \frac{2i \pi}{2i + 1} \in [0, \pi],
	\end{equation*}
	and in particular \(t_i \in [\delta^{\star}, \, 1]\) with
	\begin{equation*}
		\delta^{\star} \coloneqq \min_{i \in \{0, \dots, k\}} t_i = -\cos \frac{\pi}{2k + 1} > -1.
	\end{equation*}
	Additionally, the Chebyshev polynomials of the third kind can be written in terms of those of the fourth kind as \(V_k(t) = (-1)^k W_k(-t)\).
	
	Now, for any \(\delta \in (-1, \delta^{\star})\), the set of functions \(\{(1+t)^{3/2} t^i\}_{i=0}^{k-1}\) is a Chebyshev system in \(C([\delta,1])\) (because the fixed root at \(-1\) lies outside the interval), and thus it follows from the Alternation Theorem that any minimizer \(\hat{q}_{k-1}\) of
	\begin{equation*}
		\min_{q_{k-1} \in \mathbb{P}_{k-1}} \max_{t\in [\delta,\, 1]} (1+t)^{1/2} |1 - (1+t) q_{k-1}(t)|,
	\end{equation*}
	equivalent to \eqref{eq:weighted_minimax_lemma} by rewriting \(p_k(t) = 1 + (1 + t) q_{k-1}(t)\), is characterized by
	\begin{equation*}
		(1+t)^{1/2} \left[1 - (1+t) \hat{q}_{k-1}(t)\right]
	\end{equation*}
	having \(k + 1\) alternation points within \([\delta, 1]\). This is in turn equivalent to \(1 - (1+t) \hat{q}_{k-1}(t) \propto V_k(t)\), which means that \(\hat{p}_k\) defined by
	\begin{equation*}
		\hat{p}_k(t) = \frac{V_k(t)}{V_k(-1)} = \frac{W_k(-t)}{W_k(1)}
	\end{equation*}
	is the unique minimizer of \eqref{eq:weighted_minimax_lemma}. The first claim then follows by noting that \(W_k(1) = 1 / (2k+1)\).
	
	Finally, since \((t + 1)^{1/2} |p_k(t)|\) is 0 at \(t=-1\) for any \(p_k \in \mathbb{P}_k\), \(\hat{p}_k\) is also the unique minimizer for the limit case \(\delta = -1\): indeed, any other candidate \(\tilde{p}_k\) would have to attain a maximum of \((t + 1)^{1/2} |\tilde{p}_k(t)|\) precisely at \(t=-1\), which implies \(\tilde{p}_k \equiv 0\), but then it clearly would not fulfill the \(\tilde{p}_k(-1) = 1\) constraint.
\end{proof}

\begin{remark}
	Note that this limit argument was not possible for the unweighted minimax problem and Chebyshev polynomials of the first kind, since the endpoints of the baseline interval \([-1,1]\) are alternation points of every \(T_k\).
\end{remark}

After a trivial reparametrization to the \([0, \, \beta]\) interval, \cref{thm:weighted_minimax} allows us to characterize the solution to \eqref{eq:chebyshev_smoother_minimization} for \(\alpha\) sufficiently small, and even 0, thus removing the need for an estimate of the lower end of the fine spectrum of \(MA\). In other words, for \(\beta > 0\), we have that
\begin{equation}\label{eq:chebyshev_4th_kind}
	\hat{W}_k \coloneqq \argmin_{\substack{p_k \in \mathbb{P}_k\\ p_k(0) = 1}} \max_{\lambda\in [0,\, \beta]} \lambda^{1/2} |p_k(\lambda)|, \quad \hat{W}_k(\zeta) = \frac{1}{2k + 1} W_k\left(1 - \frac{2}{\beta} \,\zeta\right).
\end{equation}

Finally, based on the recurrence relationship
\begin{align*}
	W_0(t) &= 1,\\
	W_1(t) &= 2t + 1,\\
	W_{n+1}(t) &= 2t\, W_n(t) - W_{n-1}(t), \quad n \geq 1,
\end{align*}
the same as with Chebyshev polynomials of the first kind but with a different linear polynomial, we can replicate the process from \cite{Gutknecht2002} to arrive at a two-term iteration. For simplicity, let us ignore preconditioning in our derivation. First of all, we may write the generic form of three-term Krylov iterations for \(n \geq 0\) as
\begin{equation*}
	\begin{aligned}
		r_{n+1} &= \frac{1}{\gamma_n} (A r_n - \alpha_n r_n - \beta_{n-1} r_{n-1}),\\
		x_{n+1} &= \frac{-1}{\gamma_n} (r_n + \alpha_n x_n + \beta_{n-1} x_{n-1}),
	\end{aligned}
\end{equation*}
with the consistency constraint that \(\gamma_n = -(\alpha_n + \beta_{n-1})\), and understanding \(r_{-1} \equiv 0\), \(x_{-1} \equiv 0\), \(\beta_{-1} = 0\). For the polynomial iteration based on Chebyshev polynomials of the fourth kind \eqref{eq:chebyshev_4th_kind}, we can write, for \(n \geq 1\) and with \(c = \beta / 2\),
\begin{multline*}
	\hat{W}_{n+1}(t) = \frac{1}{2n+3} W_{n+1}\left( 1 - \frac{t}{c} \right) =\\
	= \frac{1}{2n+3} \left[ 2 \left( 1 - \frac{t}{c} \right) W_{n}\left( 1 - \frac{t}{c} \right) - W_{n-1}\left( 1 - \frac{t}{c} \right) \right] =\\
	= 2\, \frac{2n+1}{2n+3} \, \left( 1 - \frac{t}{c} \right)\, \hat{W}_{n}(t) - \frac{2n-1}{2n+3} \, \hat{W}_{n-1}(t),
\end{multline*}
which leads us to
\begin{equation*}
	r_{n+1} = 2\, \frac{2n+1}{2n+3}\, r_n - \frac{2}{c}\, \frac{2n+1}{2n+3} \, A r_n - \frac{2n-1}{2n+3}\, r_{n-1}, \quad n \geq 1,
\end{equation*}
and thus
\begin{equation*}
	\gamma_n = - \frac{2n+3}{2n+1} \, \frac{c}{2}, \quad \alpha_n = c, \quad \beta_{n-1} = - \frac{2n-1}{2n+1} \, \frac{c}{2}, \quad n\geq 1.
\end{equation*}
The remaining parameter values can be extracted from
\begin{equation*}
	r_{1} = \frac{1}{3}\,  W_{1}\left( I - \frac{1}{c} \, A \right) r_0 = r_0 - \frac{2}{3c} \, A \, r_0,
\end{equation*}
from which we can infer that \(\gamma_0 = -\frac{3c}{2} = - \alpha_0\). At this point, following \cite[Section~5]{Gutknecht2002}, we can derive a two-term recurrence based on the parameters
\begin{equation*}
	\psi_n = \frac{\beta_n}{\gamma_n} = - \left(\frac{2n+1}{2n+3}\right)^2, \quad \omega_n = - \frac{1}{\gamma_n} = \frac{2n+1}{2n+3}\, \frac{2}{c}, \quad n\geq 0,
\end{equation*}
as
\begin{equation*}
	\left\{
	\begin{aligned}
		v_n &= r_n - \psi_{n-1} v_{n-1},\\
		x_{n+1} &= x_n + \omega_n v_{n},\\
		r_{n+1} &= b - A x_{n+1}.
	\end{aligned}
	\right.
\end{equation*}
for \(n \geq 0\), understanding \(\psi_{-1} = 0\) and \(v_{n-1}\equiv 0\). Denoting \(\mu_n = \frac{2n+1}{2n+3}\) for \(n \geq 0\) and realizing that \(\mu_{n+1} = (2 - \mu_{n})^{-1}\), we can finally write the (preconditioned) polynomial iteration based on Chebyshev polynomials of the fourth kind as in \cref{alg:chebyshev_iteration_4th}.

\begin{algorithm}[!htb]
	\caption{Chebyshev iteration of the fourth kind}\label{alg:chebyshev_iteration_4th}
	\begin{algorithmic}[1]
		\Require{A linear system \(A x = b\) with a s.p.d. matrix \(A\), and an equally s.p.d. preconditioner \(M\). An initial tentative solution \(x_0\). An upper bound on the spectrum of \(MA\), \(\beta \geq \lambda_{max}(MA)\). A positive integer \(k \geq 1\).}

        \vspace{1em}

		\State{Initialize \(r = b - Ax_0\), \(p = Mr\).}
		\State{Initialize \(\mu = 1/3\).}
		\State{Set \(d = 4 \beta^{-1}\).}

        \vspace{1em}

		\State{Initialize \(x = x_0 + d \mu \, p\)}

        \vspace{1em}

		\For{\(i=2,\dots,k\)}
		
		\State{\(r \gets b - Ax\)}
		\State{\(p \gets \mu^2 \, p + Mr\)}
		\State{\(\mu \gets \left(2 - \mu \right)^{-1}\)}

		\State{\(x \gets x + d \mu \, p\)}
		
		\EndFor

        \vspace{1em}

		\Return{\(x\)}
	\end{algorithmic}
\end{algorithm}

\section{The Partition of Unity Method}\label{sec:pum}

For the experiments of this section, we will rely on the Partition of Unity Method of Schweitzer and Griebel \cite{Schweitzer2003, Griebel2002}. Let us succinctly introduce it. The method relies on sets of functions \(\{\varphi_i\}_{i=1}^n\), \(\varphi_i \colon \bar{\Omega} \to \mathbb{R}\), forming a \emph{partition of unity} (PU) over some Lipschitz domain \(\Omega\subset\mathbb{R}^d\), meaning that
\begin{equation*}
	0 \leq \varphi_i(x) \leq 1 \quad \forall i \in \{1,\dots,n\}, \quad \text{and} \quad \sum_{i=1}^n \varphi_i(x) = 1,\quad \forall x \in \bar{\Omega},
\end{equation*}
which are used to generate discrete function spaces as
\begin{equation*}
	V^{\mathrm{PU}}(\Omega) = \sum_{i=1}^n \varphi_i V_i(\omega_i),\quad \omega_i \coloneqq \supp(\varphi_i),
\end{equation*}
where each \(V_i(\omega_i)\), called \emph{local space}, is defined on the corresponding \(\omega_i\), all of them being ``glued together'' by the PU functions. Usually \(\Omega\) and \(\omega_i\) are dropped from the notation. The simplest choice for the local spaces \(V_i\) is to make them polynomial spaces of degree \(p_i\) (although more complicated spaces are naturally allowed), while the PU functions \(\varphi_i\) can be constructed to satisfy \(\varphi_i \in C^{k}(\Omega)\) for any predefined \(k \in \mathbb{N}\) (or be simply piecewise constant) \cite{JimenezRecio2024}. If we denote \(\{\vartheta_i^{k}\}_{k=1}^{m_i}\) a basis for each local space \(V_i\),
\begin{equation*}
	\left\{\varphi_i \vartheta_i^{k} \,\colon\, i = 1,\dots,n; \, k = 1, \dots, m_i\right\}
\end{equation*}
is a global basis of \(V^{\mathrm{PU}}\), and thus any classical Galerkin problem of the type
\begin{equation*}
	u\in V^{\mathrm{PU}} \quad\colon\quad a(v, u) = \ell(v), \quad \forall v\in V^{\mathrm{PU}},
\end{equation*}
can be represented by a linear system \(A \bar{u} = b\), where \(u = \sum_{i=1}^n \sum_{k=1}^{m_i} \bar{u}_{(i,k)} \varphi_i  \vartheta^{k}_{i}\) and the stiffness matrix \(A\) and load vector \(b\) are given by
\begin{equation*}
	A_{(i,k),(j,l)} = a(\varphi_i \vartheta_i^{k}, \,\varphi_j \vartheta_j^{l}),\quad b_{(i,k)} = \ell(\varphi_i \vartheta_i^{k}).
\end{equation*}
In particular, \(A\) fits \cref{def:block_structure} with the blocks \(A_{ij} = (A_{(i,k),(j,l)}) \in \mathbb{R}^{m_i \times m_j}\). The sparsity of \(A\) is achieved through a particular construction of the PU \(\{\varphi_i\}\), which in practice is constructed based on an \emph{open cover} of the domain \(\Omega\),
\begin{equation*}
	\mathcal{C}(\Omega) = \{\omega_i\}_{i=1}^n \quad\colon\quad \Omega \subset \bigcup_{i=1}^n \omega_i, \quad \supp(\varphi_i) = \omega_i,
\end{equation*}
whose elements we call \emph{patches}, which is based on a regular grid for a bounding box of \(\Omega\). In \cref{fig:pum_cover} we show such a grid and the resulting cover for \(\Omega = (0, 1)^2\) and uniform refinement level 3 (which corresponds to \(2^3\cdot 2^3 = 64\) patches). A progressive refinement of such regular grids allows us to construct a \emph{nonnested} sequence of function spaces
\begin{equation*}
	V_0^{\mathrm{PU}} \not\subset V_1^{\mathrm{PU}} \not\subset \cdots \not\subset V_{J-1}^{\mathrm{PU}} \not\subset V_{J}^{\mathrm{PU}},
\end{equation*}
where we can write each space as
\begin{equation*}
	V_{l}^{\mathrm{PU}} = \sum_{i=1}^{n_{l}} \varphi^{(l)}_{i} V^{(l)}_{i}\left( \omega^{(l)}_{i} \right), \quad \omega^{(l)}_{i} \coloneqq \supp \left( \varphi^{(l)}_{i} \right), \quad l = 0,\dots,J.
\end{equation*}
This nonnestedness makes the prolongation operators \(I_{l-1}^{l} \,\colon\, V_{l-1}^{\mathrm{PU}} \to V_{l}^{\mathrm{PU}}\) and the restriction operators \(I^{l-1}_{l} \,\colon\, V_{l}^{\mathrm{PU}} \to V_{l-1}^{\mathrm{PU}}\) nontrivial to define. Fortunately, they are also not too difficult: we can define the prolongation operators via a global-to-local \(L^2\)-projection (see \cite{Schweitzer2003}), i.e.
\begin{align*}
	I_{l-1}^{l} \,\colon\, V_{l-1}^{\mathrm{PU}} &\to V_{l}^{\mathrm{PU}}\\
	u^{(l-1)} &\mapsto I_{l-1}^{l} u^{(l-1)} = \sum_{i=1}^{n_{l}} \varphi^{(l)}_{i} u^{(l)}_i
\end{align*}
where each \(u^{(l)}_i \in V^{(l)}_{i}\), \(i=1,\dots,n_{l}\), solves
\begin{equation*}
	\langle u^{(l)}_i, v \rangle_{L^2\left(\omega^{(l)}_{i}\right)} = \langle u^{(l-1)}, v \rangle_{L^2\left(\omega^{(l)}_{i}\right)}, \quad \forall v \in V^{(l)}_{i},
\end{equation*}
and the restriction operators can then be defined by transposition. For more details, we refer once again to \cite{Schweitzer2003, Griebel2002, JimenezRecio2024}.

\begin{figure}[tbhp]
	\centering
	\begin{tikzpicture}[scale=0.8]
		\foreach \i in {0,...,8} {
			\draw [very thin,black] (\i * 0.5,0) -- (\i * 0.5,4);
		}
		\foreach \i in {0,...,8} {
			\draw [very thin,black] (0,\i * 0.5) -- (4,\i * 0.5);
		}

		\foreach \i in {0,8} {
			\draw [very thin,black] (6 + \i * 0.5,0) -- (6 + \i * 0.5,4);
		}
		\foreach \i in {0,8} {
			\draw [very thin,black] (6 + 0,\i * 0.5) -- (6 + 4,\i * 0.5);
		}
		
		\foreach \i in {1,...,7} {
			\def\xmin{{6 + 0.5 * (\i - 0.15)}}
			\def\xmax{{6 + 0.5 * (\i + 0.15)}}
			
			\draw [ultra thin,dashed] (\xmin,0) -- (\xmin,4);
			\draw [ultra thin,dashed] (\xmax,0) -- (\xmax,4);

			\fill[black, opacity=0.15] (\xmax,0) rectangle (\xmin,4);
		}
		\foreach \i in {1,...,7} {
			\def\ymin{{(\i - 0.15) * 0.5}}
			\def\ymax{{(\i + 0.15) * 0.5}}
			
			\draw [ultra thin,dashed] (6,\ymin) -- (10,\ymin);
			\draw [ultra thin,dashed] (6,\ymax) -- (10,\ymax);

			\fill[black, opacity=0.15] (6,\ymin) rectangle (10,\ymax);
		}
		
		\foreach \i in {0,...,7} {
			\def\xpos{{6 + 0.5 * (\i + 0.5)}}
			\foreach \j in {0,...,7} {
				\def\ypos{{0.5 * (\j + 0.5)}}
				\node[draw,circle,fill=black,inner sep=0pt,minimum width=1pt] at (\xpos,\ypos) {};
			}
		}
	\end{tikzpicture}

	\caption{On the left, a regular grid for \(\Omega = (0,1)^2\) at discretization level 3, and on the right, the associated open cover generated by stretching the square elements of the grid. Each black dot corresponds to a patch center, patch support boundaries are dashed, and the regions where multiple patches overlap are colored in gray.}
	\label{fig:pum_cover}
\end{figure}
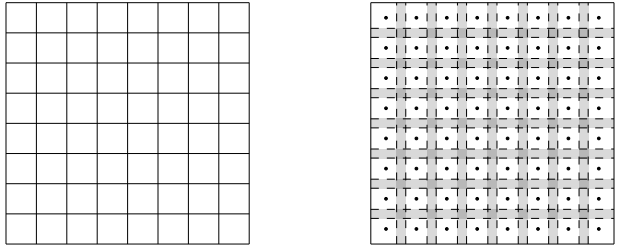

\subsection{Block-FSAI in the PUM framework}

Let us finally mention the similarity between the block-FSAI preconditioner for the PUM and the \emph{multilevel overlapping Schwarz} (MOS) smoother introduced for the PUM in \cite{Griebel2005}. Indeed, for a system \(A \bar{u} = b\), we can understand the action of the FSAI preconditioner \(F^{\top} S^{-1} F\) in two stages, the first one being \(\bar{z} = S^{-1} F b\), and the second one \(\bar{u} = F^{\top} \bar{z}\). The first step is equivalent to solving
\begin{equation*}
	A[\mathcal{P}_k, \mathcal{P}_k] \bar{w} = b[\mathcal{P}_k]
\end{equation*}
for every block-row index \(k\) and keeping only the entry from \(\bar{w}\) corresponding to index \(k\) in each case, which becomes \(\bar{z}_k\). In terms of the weak form and a PUM space \(V^{\mathrm{PU}} = \sum_{i=1}^n \varphi_i V_i\), this corresponds to solving the PDE in each \emph{enlarged local space} \(\hat{V}_k \coloneqq \sum_{i \in \mathcal{P}_k} \varphi_i V_i\), i.e.
\begin{equation*}
	w = \varphi_k w_k + \sum_{i \in \tilde{\mathcal{P}}_k} \varphi_i \tilde{w}_i \in \hat{V}_k \quad \colon \quad a(w, v) = \ell(v), \quad \forall v\in \hat{V}_k,
\end{equation*}
from which we keep only \(w_k\) (whose coefficients in the basis of \(V_k\) constitute \(\bar{z}_k\)). At the second step, every \(\bar{z}_k\) contributes directly to the final corresponding entry \(\bar{u}_k\), but also to the entries \(\bar{u}[\tilde{\mathcal{P}}_k]\) via the correction \(- A[\tilde{\mathcal{P}}_k, \tilde{\mathcal{P}}_k]^{-1} A[\tilde{\mathcal{P}}_k, \{k\}] \bar{z}_k\). This is equivalent to setting \(u = \sum_{k=1}^n q_k \in V^{\mathrm{PU}}\) where each \(q_k \in \hat{V}_k\) is of the form
\begin{equation*}
	q_k = \varphi_k w_k + \tilde{q}_k, \quad \tilde{q}_k \in \tilde{V}_k \coloneqq \sum_{i \in \tilde{\mathcal{P}}_k} \varphi_i V_i
\end{equation*}
(note that \(\hat{V}_k = \varphi_k V_k + \tilde{V}_k\)) and solves
\begin{equation*}
	a(q_k, v) = 0, \quad \forall v \in \tilde{V}_k.
\end{equation*}
Each \(\tilde{q}_k\) can thus be understood as a correction to \(\sum_{i=1}^n \varphi_i w_i\), which accounts for the fact that, for every \(k^{\prime} \in \tilde{\mathcal{P}}_k\), the solution step for \(w_{k^{\prime}}\) had not considered the local space \(V_k\) (because \(k > k^{\prime}\)).

While the FSAI preconditioner allows for an adaptive construction of the enlarged local spaces, the enlarged local spaces of the MOS smoother
\begin{equation*}
	W_i \coloneqq \sum_{j\in \mathcal{N}_i} \varphi_j V_j, \quad \mathcal{N}_i \coloneqq \{j \,\colon\, \omega_i \cap \omega_j \neq \emptyset\},
\end{equation*}
are based on a geometric criterion and their size is fixed a priori. This makes it prohibitively expensive in certain scenarios, particularly in 3-dimensional problems (note that \(\operatorname{card}(\mathcal{N}_i) = 3^d\) for homogeneously refined covers in \(\mathbb{R}^d\)). FSAI adaptivity, on the contrary, allows us to control the size of the enlarged local spaces, while also constructing them in an algebraic manner, based solely on information from the matrix itself.

\section{Numerical experiments}\label{sec:experiments}

Let us first describe the PUM spaces that we will use in our tests. We will use PU functions \(\{\varphi_i\}\) satisfying the regularity requirements of each problem, with the local spaces \(V_i\) being formed by local polynomials of degree up to \(p\), i.e.\ for \(\Omega \subset \mathbb{R}^2\)
\begin{equation*}
	V_i = \operatorname{span} \langle x^a y^b \,\colon\, a, b \geq 0,\, a + b \leq p \rangle,
\end{equation*}
and for \(\Omega \subset \mathbb{R}^3\)
\begin{equation*}
	V_i = \operatorname{span} \langle x^a y^b z^c \,\colon\, a,b,c \geq 0,\, a + b + c \leq p \rangle,
\end{equation*}
so that in particular \(\dim(V_i) = \binom{p+d}{d}\) with \(\Omega \subset \mathbb{R}^d\). In some cases, the polynomial degree of local spaces \(V_i\) for which \(\supp(\varphi_i) \cap \partial\Omega \neq \emptyset\) will be increased to accomodate for higher derivatives appearing in the boundary integrals. For the multilevel iteration, we will construct the coarse operators via Galerkin products, i.e.\ \(A_{l-1} = R_l A_l P_l\) with \(P_l\) being the prolongation matrix from level \(l-1\) to level \(l\), and \(R_l = P_l^{\top}\).

With respect to the block-FSAI preconditioners, we will restrict ourselves to adaptively-constructed sparsity patterns, either nonnested or with 1 level of nesting. In the nested case, the adaptivity parameters will be the same for the two levels. For the adaptive algorithm, we choose the trivial tolerance parameter \(\tau = 1\), allowing ourselves to modify the number of adaptive steps \(t_{max}\). Additionally, as initial pattern \(\mathcal{P}_0\) for \cref{alg:adaptive_fsai} we will always choose the diagonal pattern.

For the Chebyshev iteration, we will estimate the maximum eigenvalue of the FSAI-preconditioned matrices with the Lanczos algorithm \cite{vanderVorst1982}, for an iteration number (and size of the resulting tridiagonal matrix) of 100, and we will multiply the obtained estimate by a factor of \(1.01\). Furthermore, in addition to the multilevel \(V(k, k)\)-cycle, following the work of Phillips and Fischer \cite{Phillips2022} we will also evaluate the performance of the non-symmetric \(V(2k, 0)\)-cycle, justified by the optimality of the polynomial smoother.

\begin{example}\label{ex:biharmonic_square}

	To begin with, we consider the biharmonic equation on the unit square \(\Omega = (0,1)^2\), with essential boundary conditions on the whole boundary
	\begin{equation*}
		\left\{
		\begin{aligned}
			\Delta^2 u &= f \quad &&\text{ in } \Omega,\\
			u &= g_0 \quad &&\text{ on } \partial\Omega,\\
			\nabla u \cdot \mathbf{n} &= g_1 \quad &&\text{ on } \partial\Omega.
		\end{aligned}
		\right.
	\end{equation*}
	For a discrete space \(V_n\subset H^2(\Omega)\), and following Nitsche's method for the weak imposition of boundary conditions, the weak formulation consists in finding \(u\in V_n\) such that
	\begin{equation*}
		a_n(v, u) = \ell_n(v),\quad \forall v \in V_n,
	\end{equation*}
	where
	\begin{equation}\label{eq:biharmonic_nitsche}
	\begin{aligned}
		a_n(v, u) =& \int_{\Omega} \Delta u \Delta v \diff\Omega + \int_{\partial\Omega} \left(\gamma_n^{(0)} u v + v \nabla(\Delta u) \cdot \mathbf{n} + u \nabla(\Delta v) \cdot \mathbf{n} \right) \diff\Gamma +\\
		&+ \int_{\partial\Omega} \left( \gamma_n^{(1)} (\nabla u \cdot \mathbf{n}) (\nabla v \cdot \mathbf{n}) - \Delta u (\nabla v \cdot \mathbf{n}) - \Delta v (\nabla u \cdot \mathbf{n}) \right) \diff\Gamma,\\
		\ell_n(v) =& \int_{\Omega} f v \diff\Omega + \int_{\partial\Omega} g_0 \left(\gamma_n^{(0)} v + \nabla(\Delta v) \cdot \mathbf{n} \right) \diff\Gamma +\\
		&+ \int_{\partial\Omega} g_1 \left( \gamma_n^{(1)} (\nabla v \cdot \mathbf{n}) - \Delta v \right) \diff\Gamma.
	\end{aligned}
	\end{equation}
	The stabilization functions \(\gamma_n^{(0)}\), \(\gamma_n^{(1)}\) depend on \(V_n\) (hence our \(n\) subscript) and have to be ``large enough'' to ensure that the bilinear form is elliptic. We refer to \cite{JimenezRecio2024} for the actual criteria and how we enforce them in the PUM.
	
	For this simple domain and with homogeneous refinement, each PUM space \(V^{\mathrm{PU}}_l\) consists of \(2^l \cdot 2^l = 2^{2l}\) PU functions and their corresponding local spaces (recall \cref{fig:pum_cover} illustrating the case of level \(l=3\)). Since \(\dim(V_i) = \binom{p_i+2}{2}\), this means that \(\dim(V^{\mathrm{PU}}_l) = \binom{p+2}{2} 2^{2l}\) when using the same polynomial degree \(p\) for all local spaces.
	
	For this initial test, we construct a multilevel sequence from coarsest level 2 up to finest level 7, and local polynomial spaces of degree 2. For patches intersecting the boundary, we increase the degree to 3 to accomodate for third-order derivatives in the boundary integrals. At the finest level, this yields \(100,336\) degrees of freedom. To measure the multilevel convergence rates, we consider the reference manufactured solution
	\begin{equation*}
		\hat{u}(x, y) = \cos(2 \pi x) \sin(2 \pi y),
	\end{equation*}
	and obtain the corresponding discrete solution \(u_{\mathrm{PU}} \in V^{\mathrm{PU}}\) with a direct solver. In order to measure the convergence rates of the multilevel iteration, we choose an initial iterate \(u^{(0)} = u_{\mathrm{PU}} + e^{(0)}\), where the coefficient vector of \(e^{(0)} \in V^{\mathrm{PU}}\) is generated with random entries in \((-1, 1)\), and normalized so that \(\norm{e^{(0)}}_{L^2(\Omega)} = 1\). We then measure convergence rates for the \(L^2(\Omega)\) and the energy norms, which we can do with the corresponding error coefficient vectors \(\bar{e}^{(n)} = \bar{u}^{(n)} - \bar{u}_{\mathrm{PU}}\), the mass matrix \(M\) and the stiffness matrix \(A\) as
	\begin{equation*}
		\rho_{L^2} = \left(\frac{\langle M \bar{e}^{(n)}, \bar{e}^{(n)} \rangle}{\langle M \bar{e}^{(0)}, \bar{e}^{(0)} \rangle} \right)^{1/(2n)} = \langle M \bar{e}^{(n)}, \bar{e}^{(n)} \rangle^{1/(2n)}, \quad \rho_{A} = \left(\frac{\langle A \bar{e}^{(n)}, \bar{e}^{(n)} \rangle}{\langle A \bar{e}^{(0)}, \bar{e}^{(0)} \rangle} \right)^{1/(2n)},
	\end{equation*}
	for a number of iterations \(n\). We iterate until \(\norm{e^{(n)}}_{L^2(\Omega)} = \langle M \bar{e}^{(n)}, \bar{e}^{(n)} \rangle^{1/2} < 10^{-8}\) or a maximum number of 50 iterations is reached. In our first case, we also look at convergence rates of the residual \(r^{(n)} = b - A \bar{u}^{(n)}\) in the \(l^2\)-vector norm, which is equivalent to the \(A^2\) norm of the error \(\bar{e}^{n}\).
	
	In \cref{fig:convergence_rates_square} we provide the measured convergence rates for smoothing steps \(k \in \{1,\dots, 10\}\) and FSAI adaptive steps \(t_{max}\in \{4,\dots,9\}\), with and without a further nested FSAI preconditioner. As we can see, all convergence rates depend similarly on \(t_{max}\) and \(k\), and thus from now on (unless explicitly stated otherwise) we will only provide convergence rates in the energy norm, \(\rho_A\).
	
	\begin{figure}[tbhp]
		\centering
		\begin{tikzpicture}
			\begin{groupplot}[group style={group size= 2 by 3, vertical sep=0.75cm, horizontal sep=0.5cm},height=4.25cm,width=6.5cm]
				\nextgroupplot[grid=major,
					title={Adaptive FSAI},
					ylabel={\(\rho_{L^2}\)},
					ymode=log,
					ytick={0.12, 0.25, 0.50, 1.00}, yticklabels={0.12, 0.25, 0.50, 1.00}, ymin=0.075, ymax=1.1,
					legend entries={4,5,6,7,8,9}, legend pos=south west,
					legend style={name=legend, draw=none}, legend columns=2];

					\pgfplotsforeachungrouped \n in {4,5,...,9}{
						\addplot table [x=k, y=a\n_r0_L2_norm, col sep=comma] {bh_sq_pbdry_with_residual.csv};
					};

				\nextgroupplot[grid=major,
					title={Nested adaptive FSAI},
					ymode=log,
					ytick={0.12, 0.25, 0.50, 1.00}, yticklabels={}, ymin=0.075, ymax=1.1];
					
					\pgfplotsforeachungrouped \n in {4,5,...,9}{
						\addplot table [x=k, y=a\n_r1_L2_norm, col sep=comma] {bh_sq_pbdry_with_residual.csv};
					};
					
				\nextgroupplot[grid=major,
					ylabel={\(\rho_{A}\)},
					ymode=log,
					ytick={0.05, 0.12, 0.25, 0.50, 1.00}, yticklabels={0.05, 0.12, 0.25, 0.50, 1.00}, ymin=0.025, ymax=1.1];
					
				\pgfplotsforeachungrouped \n in {4,5,...,9}{
					\addplot table [x=k, y=a\n_r0_energy_norm, col sep=comma] {bh_sq_pbdry_with_residual.csv};
				};
				
				\nextgroupplot[grid=major,
					ymode=log,
					ytick={0.05, 0.12, 0.25, 0.50, 1.00}, yticklabels={}, ymin=0.025, ymax=1.1];
				
				\pgfplotsforeachungrouped \n in {4,5,...,9}{
					\addplot table [x=k, y=a\n_r1_energy_norm, col sep=comma] {bh_sq_pbdry_with_residual.csv};
				};
				
				\nextgroupplot[grid=major,
				xlabel={\(k\)}, ylabel={\(\rho_{r}\)},
				ymode=log,
				ytick={0.02, 0.05, 0.12, 0.25, 0.50, 1.00}, yticklabels={0.02, 0.05, 0.12, 0.25, 0.50, 1.00}, ymin=0.015, ymax=1.1];
				
				\pgfplotsforeachungrouped \n in {4,5,...,9}{
					\addplot table [x=k, y=a\n_r0_residual, col sep=comma] {bh_sq_pbdry_with_residual.csv};
				};
				
				\nextgroupplot[grid=major,
				xlabel={\(k\)},
				ymode=log,
				ytick={0.02, 0.05, 0.12, 0.25, 0.50, 1.00}, yticklabels={}, ymin=0.015, ymax=1.1];
				
				\pgfplotsforeachungrouped \n in {4,5,...,9}{
					\addplot table [x=k, y=a\n_r1_residual, col sep=comma] {bh_sq_pbdry_with_residual.csv};
				};
			\end{groupplot}
			
			\node [above,text width=1.82cm,fill=white,align=center] (legendtitle) at (legend.north) {\(t_{max}\)};
			\node [fit=(legendtitle)(legend),draw,inner sep=0pt] {};
		\end{tikzpicture}
		\caption{\(V(k,k)\)-cycle convergence rates in the \(L^2(\Omega)\) and energy norms, as well as the \(l^2\) norm of the residual, for \cref{ex:biharmonic_square}, with local polynomial spaces of degree \(p = 2\) (refined to \(p = 3\) for patches intersecting the boundary).}
		\label{fig:convergence_rates_square}
	\end{figure}
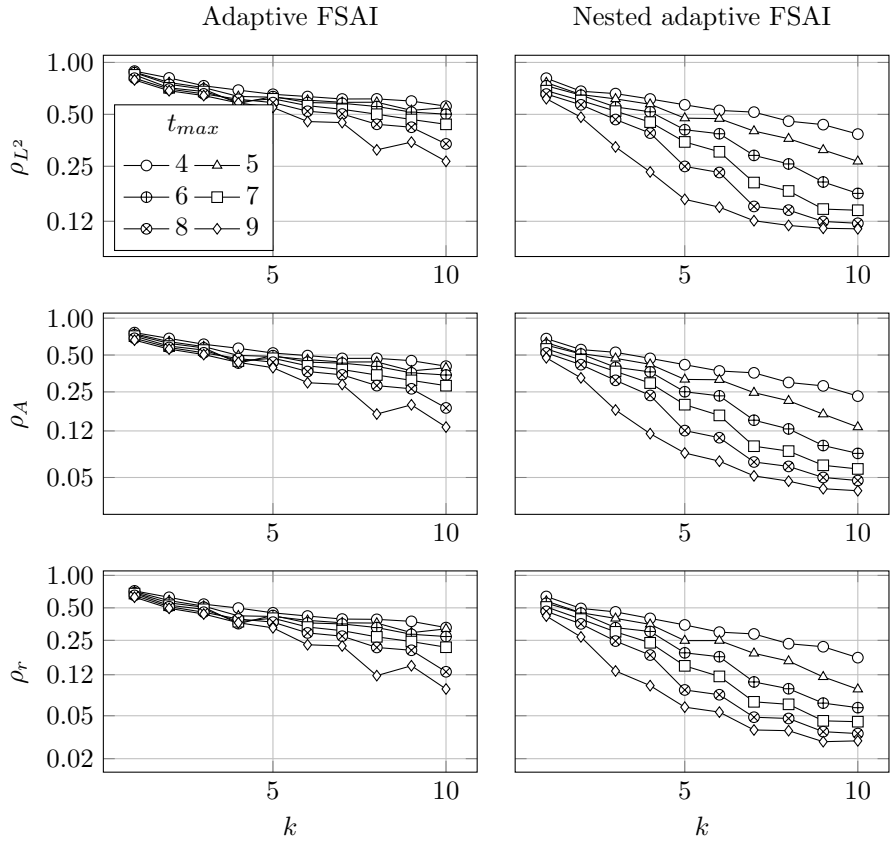

	We rely on nonnested FSAI smoothers for comparison of the \(V(k, k)\) and the \(V(2k,0)\) cycles, whose results we present in \cref{fig:sym_vs_nonsym_chebyshev}. As expected, each nonsymmetric cycle outperforms the corresponding symmetric counterpart.

	\begin{figure}[tbhp]
		\centering
		\begin{tikzpicture}
			\begin{groupplot}[group style={group size= 3 by 2, horizontal sep=0.5cm},height=3.75cm,width=5cm]
				
				\nextgroupplot[grid=major,
				title={\(t_{max}=4\)},
				ylabel={\(\rho_{A}\)},
				ymode=log,
				ytick={0.06, 0.12, 0.25, 0.50, 1.00}, yticklabels={0.06, 0.12, 0.25, 0.50, 1.00}, ymin=0.05, ymax=1.1,
				legend entries={\(V(k,k)\)\\\(V(2k,0)\)\\}, legend pos=south west];

				\addplot table [x=k, y=a4_r0_energy_norm, col sep=comma] {bh_sq_pbdry_with_residual.csv};
				
				\addplot table [x=k, y=a4_r0_energy_norm, col sep=comma] {bh_sq_nonsym.csv};
				
				\nextgroupplot[grid=major,
				title={\(t_{max}=5\)},
				ymode=log,
				ytick={0.06, 0.12, 0.25, 0.50, 1.00}, yticklabels={}, ymin=0.05, ymax=1.1];
				
				\addplot table [x=k, y=a5_r0_energy_norm, col sep=comma] {bh_sq_pbdry_with_residual.csv};
				
				\addplot table [x=k, y=a5_r0_energy_norm, col sep=comma] {bh_sq_nonsym.csv};
				
				\nextgroupplot[grid=major,
				title={\(t_{max}=6\)},
				ymode=log,
				ytick={0.06, 0.12, 0.25, 0.50, 1.00}, yticklabels={}, ymin=0.05, ymax=1.1];
				
				\addplot table [x=k, y=a6_r0_energy_norm, col sep=comma] {bh_sq_pbdry_with_residual.csv};
				
				\addplot table [x=k, y=a6_r0_energy_norm, col sep=comma] {bh_sq_nonsym.csv};
				
				\nextgroupplot[grid=major,
				title={\(t_{max}=7\)},
				xlabel={\(k\)}, ylabel={\(\rho_{A}\)},
				ymode=log,
				ytick={0.06, 0.12, 0.25, 0.50, 1.00}, yticklabels={0.06, 0.12, 0.25, 0.50, 1.00}, ymin=0.05, ymax=1.1];
				
				\addplot table [x=k, y=a7_r0_energy_norm, col sep=comma] {bh_sq_pbdry_with_residual.csv};
				
				\addplot table [x=k, y=a7_r0_energy_norm, col sep=comma] {bh_sq_nonsym.csv};
				
				\nextgroupplot[grid=major,
				title={\(t_{max}=8\)},
				xlabel={\(k\)},
				ymode=log,
				ytick={0.06, 0.12, 0.25, 0.50, 1.00}, yticklabels={}, ymin=0.05, ymax=1.1];
				
				\addplot table [x=k, y=a8_r0_energy_norm, col sep=comma] {bh_sq_pbdry_with_residual.csv};
				
				\addplot table [x=k, y=a8_r0_energy_norm, col sep=comma] {bh_sq_nonsym.csv};
				
				\nextgroupplot[grid=major,
				title={\(t_{max}=9\)},
				xlabel={\(k\)},
				ymode=log,
				ytick={0.06, 0.12, 0.25, 0.50, 1.00}, yticklabels={}, ymin=0.05, ymax=1.1];
				
				\addplot table [x=k, y=a9_r0_energy_norm, col sep=comma] {bh_sq_pbdry_with_residual.csv};
				
				\addplot table [x=k, y=a9_r0_energy_norm, col sep=comma] {bh_sq_nonsym.csv};

			\end{groupplot}
		\end{tikzpicture}
		\caption{Comparison of the (Chebyshev-smoothing) \(V(k,k)\)-cycle and \(V(2k,0)\)-cycle convergence rates for nonnested adaptive FSAI smoothers, for \cref{ex:biharmonic_square}.}
		\label{fig:sym_vs_nonsym_chebyshev}
	\end{figure}
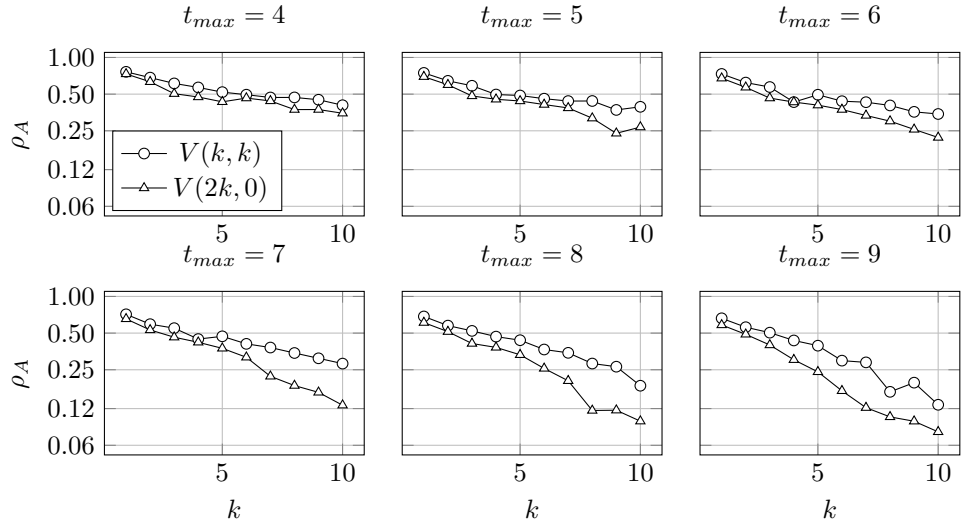
	
	Let us further mention that, in the case of nonnested FSAI, the usual Richardson smoothing cycles have only shown systematic proper convergence for \(t_{max} = 9\). The results for nested FSAI are presented in \cref{fig:chebyshev_vs_richardson}, revealing the superiority of Chebyshev smoothing also in this case.

	\begin{figure}[tbhp]
		\centering
		\begin{tikzpicture}
			\begin{groupplot}[group style={group size= 2 by 1, horizontal sep=0.5cm},height=5cm,width=6.25cm]
				\nextgroupplot[grid=major,
				title={Richardson smoothing},
				xlabel={\(k\)}, ylabel={\(\rho_{A}\)},
				ymode=log,
				ytick={0.05, 0.10, 0.25, 0.50, 1.00}, yticklabels={0.05, 0.10, 0.25, 0.50, 1.00}, ymin=0.025, ymax=1.1];
				
				\pgfplotsforeachungrouped \n in {4,5,...,9}{
					\addplot table [x=k, y=a\n_r1_energy_norm, col sep=comma] {bh_sq_richardson.csv};
				};
				
				\nextgroupplot[grid=major,
				title={Chebyshev smoothing},
				xlabel={\(k\)},
				ymode=log,
				ytick={0.06, 0.10, 0.25, 0.50, 1.00}, yticklabels={}, ymin=0.025, ymax=1.1,
				legend entries={4,5,6,7,8,9}, legend pos=outer north east,
				legend style={at={(1.05,0.935)},name=legend, draw=none}];
				
				\pgfplotsforeachungrouped \n in {4,5,...,9}{
					\addplot table [x=k, y=a\n_r1_energy_norm, col sep=comma] {bh_sq_pbdry_with_residual.csv};
				};
				
			\end{groupplot}
			
			\node [above,fill=white,align=center] (legendtitle) at (legend.north) {\(t_{max}\)};
			\node [fit=(legendtitle)(legend),draw,inner sep=0pt] {};
		\end{tikzpicture}
		\caption{Comparison of the \(V(k,k)\)-cycle convergence rates for nested adaptive FSAI smoothers performed either with Richardson smoothing or with Chebyshev smoothing, for \cref{ex:biharmonic_square}.}
		\label{fig:chebyshev_vs_richardson}
	\end{figure}
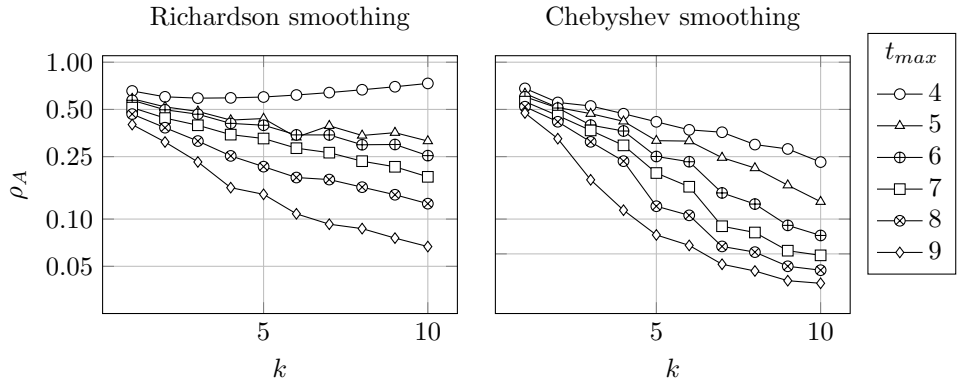
	
	Finally, we measure the convergence rates when choosing local spaces of polynomial degree \(p \in \{3, 4, 5\}\). It suffices to compare the results for nonnested FSAI, which we do in \cref{fig:A_convergence_rates_square_p} for \(k \in [1,5]\). As it can be clearly seen, the convergence rates do vary with the polynomial degree, but they decrease for increasing \(p\) (or remain similar for \(p\in\{4,5\}\) and \(t_{max} \geq 5\)).

	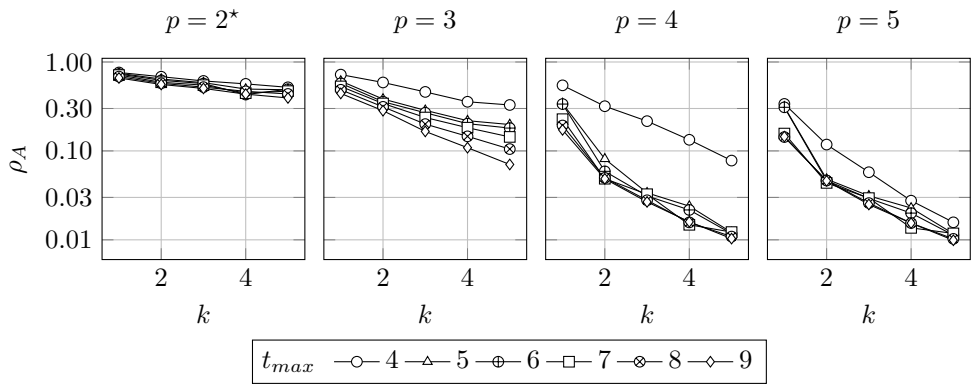
\begin{figure}[tbhp]
		\centering
		\begin{tikzpicture}
			\begin{groupplot}[group style={group size= 4 by 1, horizontal sep=0.25cm},height=4.25cm,width=4.25cm]
				\nextgroupplot[grid=major,
				title={\(p = 2^{\star}\)},
				xlabel={\(k\)}, ylabel={\(\rho_{A}\)},
				restrict x to domain=1:5,
				ymode=log,
				ytick={0.01, 0.03, 0.10, 0.30, 1.00}, yticklabels={0.01, 0.03, 0.10, 0.30, 1.00}, ymin=0.006, ymax=1.1];
				
				\pgfplotsforeachungrouped \n in {4,5,...,9}{
					\addplot table [x=k, y=a\n_r0_energy_norm, col sep=comma] {bh_sq_pbdry_with_residual.csv};
				};

				\nextgroupplot[grid=major,
				title={\(p = 3\)},
				xlabel={\(k\)},
				restrict x to domain=1:5,
				ymode=log,
				ytick={0.01, 0.03, 0.10, 0.30, 1.00}, yticklabels={}, ymin=0.006, ymax=1.1];
				
				\pgfplotsforeachungrouped \n in {4,5,...,9}{
					\addplot table [x=k, y=a\n_r0_energy_norm, col sep=comma] {bh_sq_p3.csv};
				};
				
				\nextgroupplot[grid=major,
				title={\(p = 4\)},
				xlabel={\(k\)},
				restrict x to domain=1:5,
				ymode=log,
				ytick={0.01, 0.03, 0.10, 0.30, 1.00}, yticklabels={}, ymin=0.006, ymax=1.1,
				legend entries={4,5,6,7,8,9}, 
				legend style={at={(0.,-0.4)},anchor=north},
				legend columns=6,
				legend style={name=legend, draw=none}];
				
				\pgfplotsforeachungrouped \n in {4,5,...,9}{
					\addplot table [x=k, y=a\n_r0_energy_norm, col sep=comma] {bh_sq_p4.csv};
				};
				
				\nextgroupplot[grid=major,
				title={\(p = 5\)},
				xlabel={\(k\)},
				restrict x to domain=1:5,
				ymode=log,
				ytick={0.01, 0.03, 0.10, 0.30, 1.00}, yticklabels={}, ymin=0.006, ymax=1.1];
				
				\pgfplotsforeachungrouped \n in {4,5,...,9}{
					\addplot table [x=k, y=a\n_r0_energy_norm, col sep=comma] {bh_sq_p5.csv};
				};
				
			\end{groupplot}
			
			\node [left,align=left] (legendtitle) at (legend.west) {\(t_{max}\)};
			\node [fit=(legendtitle)(legend),draw,inner sep=0pt] {};
		\end{tikzpicture}
		\caption{Comparison of the \(V(k,k)\)-cycle convergence rates for nonnested FSAI smoothers, with polynomial degrees \(p\in \{2, 3, 4, 5\}\) for the local spaces in \cref{ex:biharmonic_square}. Note that, in the case \(p=2\), we use degree 3 for local spaces with support at the boundary.}
		\label{fig:A_convergence_rates_square_p}
	\end{figure}
	
\end{example}

\begin{example}\label{ex:anisotropic_biharmonic}

	For our second example, we introduce anisotropy in the biharmonic equation via a fourth order tensor \(\mathbb{D} \in \mathbb{R}^{2\times 2\times 2 \times 2}\) with the particular shape \(\mathbb{D} = S \otimes S\), for \(S\in \mathbb{R}^{2\times 2}\) the s.p.d. matrix imposing a scaling with factor \(\kappa\) along the direction with angle \(\theta\), i.e.
	\begin{equation*}
		S = \begin{pmatrix}
			\cos\theta & -\sin\theta \\ \sin\theta & \cos\theta
		\end{pmatrix} \begin{pmatrix}
			\kappa & 0 \\ 0 & 1
		\end{pmatrix} \begin{pmatrix}
		\cos\theta & \sin\theta \\ -\sin\theta & \cos\theta
		\end{pmatrix}.
	\end{equation*}
	The anisotropic biharmonic equation then reads
	\begin{equation*}
		\left\{
		\begin{aligned}
			S_{ij} S_{kl} u_{,ijkl} &= f \quad &&\text{ in } \Omega,\\
			u &= g_0 \quad &&\text{ on } \partial\Omega,\\
			\nabla u \cdot (S\mathbf{n}) &= g_1 \quad &&\text{ on } \partial\Omega,
		\end{aligned}
		\right.
	\end{equation*}
	and the bilinear and linear forms of its weak formulation following Nitsche's method consist in replacing \(\Delta w\) by \(\trace(S\, H(w))\), \(H(\cdot)\) being the hessian matrix of second order derivatives, and \(\nabla w \cdot \mathbf{n}\) by \(\nabla w \cdot (S\mathbf{n})\), for \(w \in \{u,v\}\), in the isotropic equation \cref{eq:biharmonic_nitsche}.

	In this anisotropic setting, we are particularly interested in visualizing the FSAI preconditioner adaptively generated by \cref{alg:adaptive_fsai}. For that matter, we focus on the sparsity pattern \(\mathcal{S}\) of the preconditioning matrix \(F^{\top} S^{-1} F\) in the nonnested case (at discretization level 7 and for \(p=3\)). For each row index \(k\), the column indices in the corresponding row \(\mathcal{S}_k\) can be understood as a ``FSAI neighborhood'' of associated patches in the homogeneously refined cover, \(\mathcal{N}_k = \{\omega_j \,\colon \, j\in \mathcal{S}_k\}\). If we center every \(\mathcal{N}_k\) around the corresponding \(\omega_k\) and then superimpose all of them, we can visualize the ``average neighborhood'' as we do in \cref{fig:anisotropic_adaptive_fsai} for \(t_{max} = 6\). As we can see, the FSAI neighborhoods do extend, on average, along the anisotropy axis associated to \(\kappa\)-scaling.

	\begin{figure}[tbhp]
		\centering
		\begin{tikzpicture}[scale=0.85]
			
			\fill [opacity=0.50000] (-0.25, -0.25) rectangle (0.25, 0.25);
			\fill [opacity=0.49609] (-0.25, 0.25) rectangle (0.25, 0.75);
			\fill [opacity=0.49222] (0.25, 0.25) rectangle (0.75, 0.75);
			\fill [opacity=0.49609] (0.25, -0.25) rectangle (0.75, 0.25);
			\fill [opacity=0.25879] (0.75, -0.25) rectangle (1.25, 0.25);
			\fill [opacity=0.02133] (1.25, -0.25) rectangle (1.75, 0.25);
			\fill [opacity=0.02396] (1.25, 0.25) rectangle (1.75, 0.75);
			\fill [opacity=0.23340] (0.75, 0.25) rectangle (1.25, 0.75);
			\fill [opacity=0.09659] (0.25, 1.25) rectangle (0.75, 1.75);
			\fill [opacity=0.34723] (0.25, 0.75) rectangle (0.75, 1.25);
			\fill [opacity=0.41174] (-0.25, 0.75) rectangle (0.25, 1.25);
			\fill [opacity=0.49609] (-0.25, -0.75) rectangle (0.25, -0.25);
			\fill [opacity=0.49222] (0.25, -0.75) rectangle (0.75, -0.25);
			\fill [opacity=0.14667] (0.75, -0.75) rectangle (1.25, -0.25);
			\fill [opacity=0.00931] (1.25, -0.75) rectangle (1.75, -0.25);
			\fill [opacity=0.13788] (0.75, 0.75) rectangle (1.25, 1.25);
			\fill [opacity=0.14590] (-0.25, 1.25) rectangle (0.25, 1.75);
			\fill [opacity=0.49222] (-0.75, -0.75) rectangle (-0.25, -0.25);
			\fill [opacity=0.49609] (-0.75, -0.25) rectangle (-0.25, 0.25);
			\fill [opacity=0.49222] (-0.75, 0.25) rectangle (-0.25, 0.75);
			\fill [opacity=0.30533] (-0.75, 0.75) rectangle (-0.25, 1.25);
			\fill [opacity=0.05847] (-0.75, 1.25) rectangle (-0.25, 1.75);
			\fill [opacity=0.25879] (-1.25, -0.25) rectangle (-0.75, 0.25);
			\fill [opacity=0.14667] (-1.25, 0.25) rectangle (-0.75, 0.75);
			\fill [opacity=0.02133] (-1.75, -0.25) rectangle (-1.25, 0.25);
			\fill [opacity=0.00931] (-1.75, 0.25) rectangle (-1.25, 0.75);
			\fill [opacity=0.02396] (-1.75, -0.75) rectangle (-1.25, -0.25);
			\fill [opacity=0.23340] (-1.25, -0.75) rectangle (-0.75, -0.25);
			\fill [opacity=0.03729] (-1.25, 0.75) rectangle (-0.75, 1.25);
			\fill [opacity=0.34723] (-0.75, -1.25) rectangle (-0.25, -0.75);
			\fill [opacity=0.41174] (-0.25, -1.25) rectangle (0.25, -0.75);
			\fill [opacity=0.30533] (0.25, -1.25) rectangle (0.75, -0.75);
			\fill [opacity=0.13788] (-1.25, -1.25) rectangle (-0.75, -0.75);
			\fill [opacity=0.09659] (-0.75, -1.75) rectangle (-0.25, -1.25);
			\fill [opacity=0.14590] (-0.25, -1.75) rectangle (0.25, -1.25);
			\fill [opacity=0.03729] (0.75, -1.25) rectangle (1.25, -0.75);
			\fill [opacity=0.00690] (-0.25, 1.75) rectangle (0.25, 2.25);
			\fill [opacity=0.05847] (0.25, -1.75) rectangle (0.75, -1.25);
			\fill [opacity=0.00690] (-0.25, -2.25) rectangle (0.25, -1.75);
			\fill [opacity=0.00415] (0.75, 1.25) rectangle (1.25, 1.75);
			\fill [opacity=0.00275] (1.75, -0.25) rectangle (2.25, 0.25);
			\fill [opacity=0.00275] (-2.25, -0.25) rectangle (-1.75, 0.25);
			\fill [opacity=0.00415] (-1.25, -1.75) rectangle (-0.75, -1.25);

			\draw [very thin,black,dashed] ({-2 * 0.5}, {-2 * 0.8660254}) -- ({2 * 0.5}, {2 * 0.8660254});
			
			\fill [color=black] (0,0) circle [radius=2pt];

			\fill [opacity=0.50000] (4.75, -0.25) rectangle (5.25, 0.25);
			\fill [opacity=0.49609] (4.75, 0.25) rectangle (5.25, 0.75);
			\fill [opacity=0.49222] (5.25, 0.25) rectangle (5.75, 0.75);
			\fill [opacity=0.49609] (5.25, -0.25) rectangle (5.75, 0.25);
			\fill [opacity=0.42084] (5.75, -0.25) rectangle (6.25, 0.25);
			\fill [opacity=0.15359] (6.25, -0.25) rectangle (6.75, 0.25);
			\fill [opacity=0.05060] (6.25, 0.25) rectangle (6.75, 0.75);
			\fill [opacity=0.49609] (4.75, -0.75) rectangle (5.25, -0.25);
			\fill [opacity=0.49222] (5.25, -0.75) rectangle (5.75, -0.25);
			\fill [opacity=0.36227] (5.75, -0.75) rectangle (6.25, -0.25);
			\fill [opacity=0.13696] (6.25, -0.75) rectangle (6.75, -0.25);
			\fill [opacity=0.27481] (5.75, 0.25) rectangle (6.25, 0.75);
			\fill [opacity=0.02585] (5.75, 0.75) rectangle (6.25, 1.25);
			\fill [opacity=0.14810] (5.25, 0.75) rectangle (5.75, 1.25);
			\fill [opacity=0.26459] (4.75, 0.75) rectangle (5.25, 1.25);
			\fill [opacity=0.49222] (4.25, -0.75) rectangle (4.75, -0.25);
			\fill [opacity=0.49609] (4.25, -0.25) rectangle (4.75, 0.25);
			\fill [opacity=0.49222] (4.25, 0.25) rectangle (4.75, 0.75);
			\fill [opacity=0.42084] (3.75, -0.25) rectangle (4.25, 0.25);
			\fill [opacity=0.36227] (3.75, 0.25) rectangle (4.25, 0.75);
			\fill [opacity=0.15359] (3.25, -0.25) rectangle (3.75, 0.25);
			\fill [opacity=0.13696] (3.25, 0.25) rectangle (3.75, 0.75);
			\fill [opacity=0.23825] (4.25, 0.75) rectangle (4.75, 1.25);
			\fill [opacity=0.02869] (4.75, 1.25) rectangle (5.25, 1.75);
			\fill [opacity=0.05060] (3.25, -0.75) rectangle (3.75, -0.25);
			\fill [opacity=0.27481] (3.75, -0.75) rectangle (4.25, -0.25);
			\fill [opacity=0.13495] (3.75, 0.75) rectangle (4.25, 1.25);
			\fill [opacity=0.14810] (4.25, -1.25) rectangle (4.75, -0.75);
			\fill [opacity=0.26459] (4.75, -1.25) rectangle (5.25, -0.75);
			\fill [opacity=0.23825] (5.25, -1.25) rectangle (5.75, -0.75);
			\fill [opacity=0.01587] (4.25, 1.25) rectangle (4.75, 1.75);
			\fill [opacity=0.02585] (3.75, -1.25) rectangle (4.25, -0.75);
			\fill [opacity=0.02869] (4.75, -1.75) rectangle (5.25, -1.25);
			\fill [opacity=0.13495] (5.75, -1.25) rectangle (6.25, -0.75);
			\fill [opacity=0.00293] (4.75, 1.75) rectangle (5.25, 2.25);
			\fill [opacity=0.00690] (5.25, 1.25) rectangle (5.75, 1.75);
			\fill [opacity=0.01587] (5.25, -1.75) rectangle (5.75, -1.25);
			\fill [opacity=0.00293] (4.75, -2.25) rectangle (5.25, -1.75);
			\fill [opacity=0.00690] (4.25, -1.75) rectangle (4.75, -1.25);
			\fill [opacity=0.00305] (6.75, -0.75) rectangle (7.25, -0.25);
			\fill [opacity=0.00305] (2.75, 0.25) rectangle (3.25, 0.75);
			\fill [opacity=0.00415] (6.25, 0.75) rectangle (6.75, 1.25);
			\fill [opacity=0.00415] (3.25, -1.25) rectangle (3.75, -0.75);

			\draw [very thin,black,dashed] ({5 - 2 * 0.8660254}, {2 * 0.5}) -- ({5 + 2 * 0.8660254}, {-2 * 0.5});
			
			\fill [color=black] (5,0) circle [radius=2pt];

			\fill [opacity=0.50000] (9.75, -0.25) rectangle (10.25, 0.25);
			\fill [opacity=0.49609] (9.75, 0.25) rectangle (10.25, 0.75);
			\fill [opacity=0.46259] (10.25, 0.25) rectangle (10.75, 0.75);
			\fill [opacity=0.49609] (10.25, -0.25) rectangle (10.75, 0.25);
			\fill [opacity=0.49219] (10.75, -0.25) rectangle (11.25, 0.25);
			\fill [opacity=0.26907] (11.25, -0.25) rectangle (11.75, 0.25);
			\fill [opacity=0.14484] (11.25, 0.25) rectangle (11.75, 0.75);
			\fill [opacity=0.49609] (9.75, -0.75) rectangle (10.25, -0.25);
			\fill [opacity=0.44730] (10.25, -0.75) rectangle (10.75, -0.25);
			\fill [opacity=0.36905] (10.75, -0.75) rectangle (11.25, -0.25);
			\fill [opacity=0.12286] (11.25, -0.75) rectangle (11.75, -0.25);
			\fill [opacity=0.40265] (10.75, 0.25) rectangle (11.25, 0.75);
			\fill [opacity=0.46259] (9.25, -0.75) rectangle (9.75, -0.25);
			\fill [opacity=0.49609] (9.25, -0.25) rectangle (9.75, 0.25);
			\fill [opacity=0.06140] (10.75, 0.75) rectangle (11.25, 1.25);
			\fill [opacity=0.10507] (10.25, 0.75) rectangle (10.75, 1.25);
			\fill [opacity=0.18030] (9.75, 0.75) rectangle (10.25, 1.25);
			\fill [opacity=0.44730] (9.25, 0.25) rectangle (9.75, 0.75);
			\fill [opacity=0.49219] (8.75, -0.25) rectangle (9.25, 0.25);
			\fill [opacity=0.36905] (8.75, 0.25) rectangle (9.25, 0.75);
			\fill [opacity=0.26907] (8.25, -0.25) rectangle (8.75, 0.25);
			\fill [opacity=0.12286] (8.25, 0.25) rectangle (8.75, 0.75);
			\fill [opacity=0.09409] (9.25, 0.75) rectangle (9.75, 1.25);
			\fill [opacity=0.03018] (9.75, 1.25) rectangle (10.25, 1.75);
			\fill [opacity=0.14484] (8.25, -0.75) rectangle (8.75, -0.25);
			\fill [opacity=0.40265] (8.75, -0.75) rectangle (9.25, -0.25);
			\fill [opacity=0.03839] (8.75, 0.75) rectangle (9.25, 1.25);
			\fill [opacity=0.10507] (9.25, -1.25) rectangle (9.75, -0.75);
			\fill [opacity=0.18030] (9.75, -1.25) rectangle (10.25, -0.75);
			\fill [opacity=0.09409] (10.25, -1.25) rectangle (10.75, -0.75);
			\fill [opacity=0.05045] (11.75, -0.25) rectangle (12.25, 0.25);
			\fill [opacity=0.06140] (8.75, -1.25) rectangle (9.25, -0.75);
			\fill [opacity=0.03018] (9.75, -1.75) rectangle (10.25, -1.25);
			\fill [opacity=0.03839] (10.75, -1.25) rectangle (11.25, -0.75);
			\fill [opacity=0.00983] (9.25, 1.25) rectangle (9.75, 1.75);
			\fill [opacity=0.00815] (10.25, 1.25) rectangle (10.75, 1.75);
			\fill [opacity=0.00293] (9.75, 1.75) rectangle (10.25, 2.25);
			\fill [opacity=0.00815] (9.25, -1.75) rectangle (9.75, -1.25);
			\fill [opacity=0.00983] (10.25, -1.75) rectangle (10.75, -1.25);
			\fill [opacity=0.00293] (9.75, -2.25) rectangle (10.25, -1.75);
			\fill [opacity=0.05045] (7.75, -0.25) rectangle (8.25, 0.25);
			\fill [opacity=0.00653] (11.75, 0.25) rectangle (12.25, 0.75);
			\fill [opacity=0.00653] (7.75, -0.75) rectangle (8.25, -0.25);
			
			\draw [very thin,black,dashed] ({10 - 2}, 0.) -- ({10 + 2}, 0.);
			
			\fill [color=black] (10,0) circle [radius=2pt];

		\end{tikzpicture}
		
		\caption{Average FSAI neighborhoods extracted from the rows of preconditioning matrix \(F^{\top} S^{-1} F\) with \(t_{max} = 6\), at discretization level \(7\), for the anisotropic problem from \cref{ex:anisotropic_biharmonic}. The left image corresponds to \(\theta = \frac{\pi}{3}\), the middle one to \(\theta = -\frac{\pi}{6}\), and the right one to \(\theta=0\) (with the respective axes illustrated by dashed lines going through the neighborhood's center). In all three cases we use an anisotropic scaling \(\kappa = 10\).}
		\label{fig:anisotropic_adaptive_fsai}
	\end{figure}
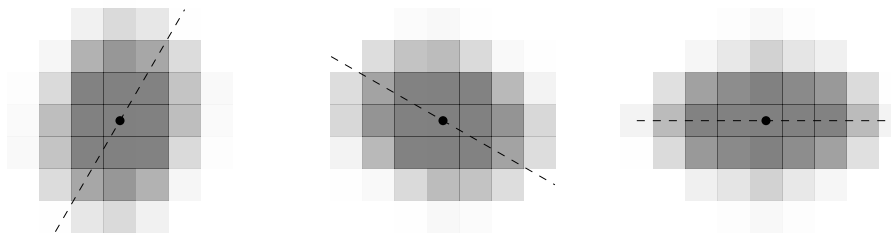
	
\end{example}

\begin{example}\label{ex:biharmonic_cube}

	For this new example, we consider again the biharmonic equation, but this time in the unit cube domain \(\Omega = (0, 1)^3\), where the weak formulation remains unmodified. Given the larger size of local spaces with respect to the two-dimensional case, we restrict \(t_{max} \leq 4\) for FSAI adaptivity and use 4 as the finest discretization level (keeping 2 as the coarsest level). In this case, each space consists of \(2^{3l}\) local spaces. As reference manufactured solution we pick in this case
	\begin{equation*}
		\hat{u}(x, y, z) = \cos(2 \pi x) \sin(2 \pi y) \sin(2 \pi z).
	\end{equation*}
	
	This time we iterate until the \(L^2(\Omega)\) norm of the error is below \(10^{-7}\) or a maximum number of 100 iterations is reached. Given the cost of using large polynomial degrees in a 3-dimensional setting (local spaces are of size \(\binom{p+3}{3}\), i.e.\ 20 for \(p=3\) and 35 for \(p=4\), and this size is multiplied by up to \(t_{max}\) for the blocks to be inverted at FSAI's setup stage), we provide only convergence rates for \(p = 2\) (in \cref{fig:A_convergence_rates_cube}, where as before we refine the polynomial degree to 3 for patches intersecting the boundary) and \(p = 3\) (in \cref{fig:A_convergence_rates_cube_p3}). The combination of discretization level 4 with polynomial degree 3 yields \(81,920\) degrees of freedom. As we can observe, the results for \(p=3\) are below those of \(p=2\) both for nested and nonnested FSAI. We remark the similar convergence rate histories for nested FSAI with \(t_{max} = c\) and nonnested FSAI with \(t_{max} = 2c\), pointing at the similarity of the FSAI \(F\) matrices when they are constructed in 2 levels with \(c\) non-zero blocks per row, or in a single level with \(2c\) non-zero blocks per row.
	
	Finally, in this case we choose \(p = 3\) and nested FSAI for the comparison with Richardson smoothing and with nonsymmetric cycles, which we provide simultaneously in \cref{fig:A_convergence_rates_cube_p3_comp}. As before, we clearly observe the superiority of Chebyshev smoothing with respect to Richardson smoothing, and of non-symmetric cycles with respect to symmetric ones.
	
	\begin{figure}[tbhp]
		\centering
		\begin{tikzpicture}
			\begin{groupplot}[group style={group size= 2 by 1, horizontal sep=0.5cm},height=5cm,width=6.75cm]
				\nextgroupplot[grid=major,
				title={Adaptive FSAI},
				xlabel={\(k\)}, ylabel={\(\rho_{A}\)},
				ymode=log,
				ytick={0.12, 0.25, 0.50, 1.00}, yticklabels={0.12, 0.25, 0.50, 1.00}, ymin=0.10, ymax=1.05,
				legend entries={\(1\),\(2\),\(3\),\(4\)}, legend pos=south west,
				legend style={name=legend, draw=none}, legend columns=2];
				
				\pgfplotsforeachungrouped \n in {1,2,3,4}{
					\addplot table [x=k, y=a\n_r0_energy_norm, col sep=comma] {bh_cube_pbdry.csv};
				};
				
				\nextgroupplot[grid=major,
				title={Nested adaptive FSAI},
				xlabel={\(k\)},
				ymode=log,
				ytick={0.12, 0.25, 0.50, 1.00}, yticklabels={}, ymin=0.10, ymax=1.05];
				
				\pgfplotsforeachungrouped \n in {1,2,3,4}{
					\addplot table [x=k, y=a\n_r1_energy_norm, col sep=comma] {bh_cube_pbdry.csv};
				};
				
			\end{groupplot}
			
			\node [above,text width=1.82cm,fill=white,align=center] (legendtitle) at (legend.north) {\( t_{max}\)};
			\node [fit=(legendtitle)(legend),draw,inner sep=0pt] {};
		\end{tikzpicture}
		\caption{\(V(k,k)\)-cycle convergence rates for \cref{ex:biharmonic_cube}, with local polynomial spaces of degree \(p = 2\) (refined to \(p = 3\) for patches intersecting the boundary).}
		\label{fig:A_convergence_rates_cube}
	\end{figure}
	
	\begin{figure}[tbhp]
		\centering
		\begin{tikzpicture}
			\begin{groupplot}[group style={group size= 2 by 1, horizontal sep=0.5cm},height=5cm,width=6.75cm]
				\nextgroupplot[grid=major,
				title={Adaptive FSAI},
				xlabel={\(k\)}, ylabel={\(\rho_{A}\)},
				ymode=log,
				ytick={0.06, 0.12, 0.25, 0.50, 1.00}, yticklabels={0.06, 0.12, 0.25, 0.50, 1.00}, ymin=0.05, ymax=1.05,
				legend entries={\(1\),\(2\),\(3\),\(4\)}, legend pos=south west,
				legend style={name=legend, draw=none}, legend columns=2];
				
				\pgfplotsforeachungrouped \n in {1,2,3,4}{
					\addplot table [x=k, y=a\n_r0_energy_norm, col sep=comma] {bh_cube_p3.csv};
				};
				
				\nextgroupplot[grid=major,
				title={Nested adaptive FSAI},
				xlabel={\(k\)},
				ymode=log,
				ytick={0.06, 0.12, 0.25, 0.50, 1.00}, yticklabels={}, ymin=0.05, ymax=1.05];
				
				\pgfplotsforeachungrouped \n in {1,2,3,4}{
					\addplot table [x=k, y=a\n_r1_energy_norm, col sep=comma] {bh_cube_p3.csv};
				};
				
			\end{groupplot}
			
			\node [above,text width=1.82cm,fill=white,align=center] (legendtitle) at (legend.north) {\( t_{max}\)};
			\node [fit=(legendtitle)(legend),draw,inner sep=0pt] {};
		\end{tikzpicture}
		\caption{\(V(k,k)\)-cycle convergence rates for \cref{ex:biharmonic_cube}, with local polynomial spaces of degree \(p = 3\) (compare with \cref{fig:A_convergence_rates_cube} to assess the impact of \(p\)-refinement).}
		\label{fig:A_convergence_rates_cube_p3}
	\end{figure}
	
	\begin{figure}[tbhp]
		\centering
		\begin{tikzpicture}
			\begin{groupplot}[group style={group size= 3 by 1, horizontal sep=0.25cm},height=4.5cm,width=5.25cm]
				\nextgroupplot[grid=major,
				title={\(V(k,k)\)-Richardson},
				xlabel={\(k\)}, ylabel={\(\rho_{A}\)},
				ymode=log,
				ytick={0.03, 0.06, 0.12, 0.25, 0.50, 1.00}, yticklabels={0.03, 0.06, 0.12, 0.25, 0.50, 1.00}, ymin=0.025, ymax=1.05,
				legend entries={\(1\),\(2\),\(3\),\(4\)}, legend pos=south west,
				legend style={name=legend, draw=none}, legend columns=2];
				
				\pgfplotsforeachungrouped \n in {1,2,3,4}{
					\addplot table [x=k, y=a\n_r1_energy_norm, col sep=comma] {bh_cube_richardson_p3.csv};
				};
				
				\nextgroupplot[grid=major,
				title={\(V(k,k)\)-Chebyshev},
				xlabel={\(k\)},
				ymode=log,
				ytick={0.03, 0.06, 0.12, 0.25, 0.50, 1.00}, yticklabels={}, ymin=0.025, ymax=1.05];
				
				\pgfplotsforeachungrouped \n in {1,2,3,4}{
					\addplot table [x=k, y=a\n_r1_energy_norm, col sep=comma] {bh_cube_p3.csv};
				};
				
				\nextgroupplot[grid=major,
				title={\(V(2k,0)\)-Chebyshev},
				xlabel={\(k\)},
				ymode=log,
				ytick={0.03, 0.06, 0.12, 0.25, 0.50, 1.00}, yticklabels={}, ymin=0.025, ymax=1.05];
				
				\pgfplotsforeachungrouped \n in {1,2,3,4}{
					\addplot table [x=k, y=a\n_r1_energy_norm, col sep=comma] {bh_cube_nonsym_p3.csv};
				};
				
			\end{groupplot}
			
			\node [above,text width=1.82cm,fill=white,align=center] (legendtitle) at (legend.north) {\( t_{max}\)};
			\node [fit=(legendtitle)(legend),draw,inner sep=0pt] {};
		\end{tikzpicture}
		\caption{Comparison of Richardson and Chebyshev \(V(k,k)\)-cycle, and Chebyshev \(V(2k,0)\)-cycle convergence rates, for \cref{ex:biharmonic_cube}, with nested FSAI smoothers and local polynomial spaces of degree \(p = 3\).}
		\label{fig:A_convergence_rates_cube_p3_comp}
	\end{figure}
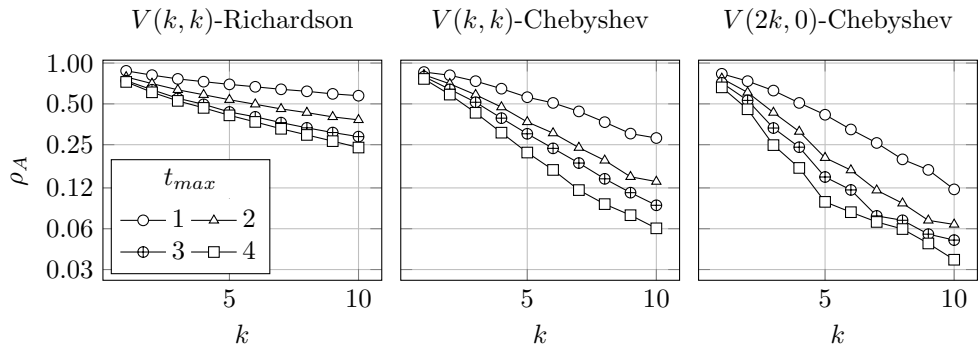

\end{example}

\begin{example}\label{ex:triharmonic_square}
	For our last example we further increase the order of the problem and aim to solve the \emph{triharmonic} equation
	\begin{equation*}
		\left\{
		\begin{aligned}
			-\Delta^3 u &= f \quad &&\text{ in } \Omega,\\
			u &= g_0 \quad &&\text{ on } \partial\Omega,\\
			\nabla u \cdot \mathbf{n} &= g_1 \quad &&\text{ on } \partial\Omega,\\
			\Delta u &= g_2 \quad &&\text{ on } \partial\Omega.
		\end{aligned}
		\right.
	\end{equation*}
	For a discrete space \(V_n\subset H^3(\Omega)\), Nitsche's method in this case yields a weak formulation with bilinear and linear forms
	\begin{align*}
		a_n(v, u) =& \int_{\Omega} \nabla (\Delta u) \cdot \nabla (\Delta v) \diff\Omega + \int_{\partial\Omega} \left(\gamma_n^{(0)} u v - v \nabla(\Delta^2 u) \cdot \mathbf{n} - u \nabla(\Delta^2 v) \cdot \mathbf{n} \right) \diff\Gamma +\\
		&+ \int_{\partial\Omega} \left( \gamma_n^{(1)} (\nabla u \cdot \mathbf{n}) (\nabla v \cdot \mathbf{n}) + \Delta^2 u (\nabla v \cdot \mathbf{n}) + \Delta^2 v (\nabla u \cdot \mathbf{n}) \right) \diff\Gamma +\\
		&+ \int_{\partial\Omega} \left( \gamma_n^{(2)} \Delta u \Delta v - \Delta v \nabla (\Delta u) \cdot \mathbf{n} - \Delta u \nabla (\Delta v) \cdot \mathbf{n} \right) \diff\Gamma ,\\
		\ell_n(v) =& \int_{\Omega} f v \diff\Omega + \int_{\partial\Omega} g_0 \left(\gamma_n^{(0)} v - \nabla(\Delta^2 v) \cdot \mathbf{n} \right) \diff\Gamma +\\
		&+ \int_{\partial\Omega} g_1 \left( \gamma_n^{(1)} (\nabla v \cdot \mathbf{n}) + \Delta^2 v \right) \diff\Gamma + \int_{\partial\Omega} g_2 \left( \gamma_n^{(2)} \Delta v - \nabla (\Delta v) \cdot \mathbf{n} \right) \diff\Gamma.
	\end{align*}
	
	In this case, we choose 5 as the finest discretization level, and \(p\in\{4,5\}\) for the local spaces (for \(p=4\), we refine the local spaces with support at the boundary to \(p=5\)). With respect to FSAI adaptivity, we restrict \(t_{max} \in [6, 9]\). In \cref{fig:A_convergence_rates_th}, we provide the measured convergence rates for the case \(p = 4\), and in \cref{fig:A_convergence_rates_th_5}, those for \(p = 5\). We finally choose the \(p = 4\) case with nested FSAI smoothers for the comparison with nonsymmetric cycles, whose results we provide in \cref{fig:sym_vs_nonsym_chebyshev_th}, showing once again the superiority of the \(V(2k, 0)\)-cycles. In all cases, we iterate until the \(L^2(\Omega)\) norm of the error is below \(10^{-7}\) or a maximum number of 100 iterations is reached. We note that, for this problem, Richardson smoothing did not yield a convergent cycle for any of our considered FSAI settings in the \(p = 4\) case.
	
	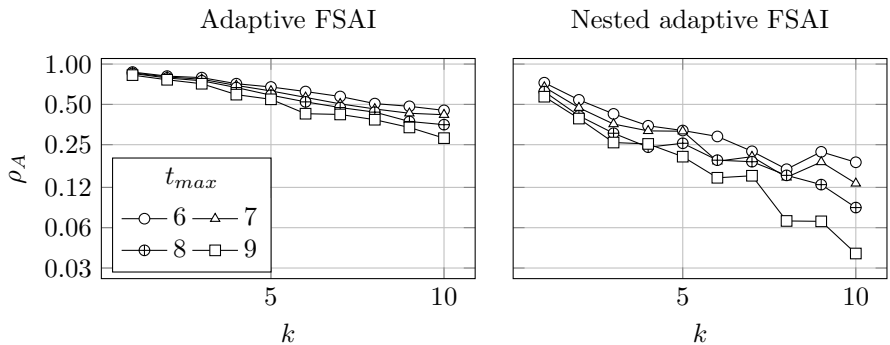
\begin{figure}[tbhp]
		\centering
		\begin{tikzpicture}
			\begin{groupplot}[group style={group size= 2 by 1, horizontal sep=0.5cm},height=4.5cm,width=6.5cm]
				\nextgroupplot[grid=major,
				title={Adaptive FSAI},
				xlabel={\(k\)}, ylabel={\(\rho_{A}\)},
				ymode=log,
				ytick={0.03, 0.06, 0.12, 0.25, 0.50, 1.00}, yticklabels={0.03, 0.06, 0.12, 0.25, 0.50, 1.00}, ymin=0.025, ymax=1.1,
				legend entries={6,7,8,9}, legend pos=south west,
				legend style={name=legend, draw=none}, legend columns=2];
				
				\pgfplotsforeachungrouped \n in {6,7,...,9}{
					\addplot table [x=k, y=a\n_r0_energy_norm, col sep=comma] {th_sq_p4.csv};
				};
				
				\nextgroupplot[grid=major,
				title={Nested adaptive FSAI},
				xlabel={\(k\)},
				ymode=log,
				ytick={0.03, 0.06, 0.12, 0.25, 0.50, 1.00}, yticklabels={}, ymin=0.025, ymax=1.1];
				
				\pgfplotsforeachungrouped \n in {6,7,...,9}{
					\addplot table [x=k, y=a\n_r1_energy_norm, col sep=comma] {th_sq_p4.csv};
				};
				
			\end{groupplot}
			
			\node [above,text width=1.82cm,fill=white,align=center] (legendtitle) at (legend.north) {\( t_{max}\)};
			\node [fit=(legendtitle)(legend),draw,inner sep=0pt] {};
		\end{tikzpicture}
		\caption{\(V(k,k)\)-cycle convergence rates for \cref{ex:triharmonic_square}, with local polynomial spaces of degree \(p = 4\) (refined to \(p = 5\) for patches intersecting the boundary).}
		\label{fig:A_convergence_rates_th}
	\end{figure}
	
	\begin{figure}[tbhp]
		\centering
		\begin{tikzpicture}
			\begin{groupplot}[group style={group size= 2 by 1, horizontal sep=0.5cm},height=4.5cm,width=6.5cm]
				\nextgroupplot[grid=major,
				title={Adaptive FSAI},
				xlabel={\(k\)}, ylabel={\(\rho_{A}\)},
				ymode=log,
				ytick={0.003, 0.01, 0.03, 0.10, 0.30, 1.00}, yticklabels={0.003, 0.01, 0.03, 0.10, 0.30, 1.00}, ymin=0.0025, ymax=1.1,
				legend entries={6,7,8,9}, legend pos=south west,
				legend style={name=legend, draw=none}, legend columns=2];
				
				\pgfplotsforeachungrouped \n in {6,7,...,9}{
					\addplot table [x=k, y=a\n_r0_energy_norm, col sep=comma] {th_sq_p5.csv};
				};
				
				\nextgroupplot[grid=major,
				title={Nested adaptive FSAI},
				xlabel={\(k\)},
				ymode=log,
				ytick={0.003, 0.01, 0.03, 0.10, 0.30, 1.00}, yticklabels={}, ymin=0.0025, ymax=1.1];
				
				\pgfplotsforeachungrouped \n in {6,7,...,9}{
					\addplot table [x=k, y=a\n_r1_energy_norm, col sep=comma] {th_sq_p5.csv};
				};
				
			\end{groupplot}
			
			\node [above,text width=1.82cm,fill=white,align=center] (legendtitle) at (legend.north) {\( t_{max}\)};
			\node [fit=(legendtitle)(legend),draw,inner sep=0pt] {};
		\end{tikzpicture}
		\caption{\(V(k,k)\)-cycle convergence rates for \cref{ex:triharmonic_square}, with local polynomial spaces of degree \(p = 5\) (compare with \cref{fig:A_convergence_rates_th} to assess the impact of \(p\)-refinement).}
		\label{fig:A_convergence_rates_th_5}
	\end{figure}
	
	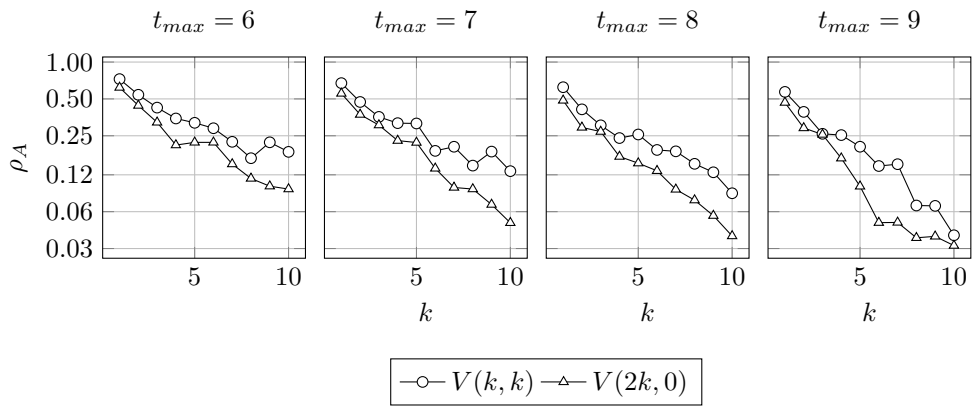
\begin{figure}[!htbp]
		\centering
		\begin{tikzpicture}
			\begin{groupplot}[group style={group size= 4 by 1, horizontal sep=0.25cm},height=4.25cm,width=4.25cm]
				
				\nextgroupplot[grid=major,
				title={\(t_{max}=6\)},
				ylabel={\(\rho_{A}\)},
				ymode=log,
				ytick={0.03, 0.06, 0.12, 0.25, 0.50, 1.00}, yticklabels={0.03, 0.06, 0.12, 0.25, 0.50, 1.00}, ymin=0.025, ymax=1.1];
				
				\addplot table [x=k, y=a6_r1_energy_norm, col sep=comma] {th_sq_p4.csv};
				
				\addplot table [x=k, y=a6_r1_energy_norm, col sep=comma] {th_sq_nonsym_p4.csv};
				
				\nextgroupplot[grid=major,
				title={\(t_{max}=7\)},
				xlabel={\(k\)},
				ymode=log,
				ytick={0.03, 0.06, 0.12, 0.25, 0.50, 1.00}, yticklabels={}, ymin=0.025, ymax=1.1];
				
				\addplot table [x=k, y=a7_r1_energy_norm, col sep=comma] {th_sq_p4.csv};
				
				\addplot table [x=k, y=a7_r1_energy_norm, col sep=comma] {th_sq_nonsym_p4.csv};
				
				\nextgroupplot[grid=major,
				title={\(t_{max}=8\)},
				xlabel={\(k\)},
				ymode=log,
				ytick={0.03, 0.06, 0.12, 0.25, 0.50, 1.00}, yticklabels={}, ymin=0.025, ymax=1.1,
				legend entries={\(V(k,k)\)\\\(V(2k,0)\)\\}, 
				legend style={at={(0.,-0.5)},anchor=north},
				legend columns=2,
				legend style={name=legend, draw=none}];
				
				\addplot table [x=k, y=a8_r1_energy_norm, col sep=comma] {th_sq_p4.csv};
				
				\addplot table [x=k, y=a8_r1_energy_norm, col sep=comma] {th_sq_nonsym_p4.csv};
				
				\nextgroupplot[grid=major,
				title={\(t_{max}=9\)},
				xlabel={\(k\)},
				ymode=log,
				ytick={0.03, 0.06, 0.12, 0.25, 0.50, 1.00}, yticklabels={}, ymin=0.025, ymax=1.1];
				
				\addplot table [x=k, y=a9_r1_energy_norm, col sep=comma] {th_sq_p4.csv};
				
				\addplot table [x=k, y=a9_r1_energy_norm, col sep=comma] {th_sq_nonsym_p4.csv};
				
			\end{groupplot}
			
			\node [fit=(legend),draw,inner sep=0pt] {};
		\end{tikzpicture}
		\caption{Comparison of the \(V(k,k)\) and \(V(2k,0)\)-cycle convergence rates for \cref{ex:triharmonic_square}, with nested adaptive FSAI smoothers and local polynomial spaces of degree \(p = 4\) (refined to \(p = 5\) for patches intersecting the boundary).}
		\label{fig:sym_vs_nonsym_chebyshev_th}
	\end{figure}

\end{example}

\section{Conclusions}\label{sec:conclusions}

We have explored the flexibility of FSAI smoothers with respect to adaptivity and nestedness, relying on PUM discretizations of the biharmonic and triharmonic equations. We have shown how their smoothing capability improves with increasing density of the preconditioning matrix, which we achieved either by allowing more non-zero entries per row in the adaptive algorithm, or by nesting an additional FSAI preconditioner into the first one (or by a combination of both). We have shown that their effectiveness increases with the polynomial degree of the discrete space, and that the adaptive pattern construction allows to capture anisotropies in the PDE.

We have also shown that smoothing based on the Chebyshev iteration of the fourth kind yields better convergence rates than the usual Richardson smoothing, and, for a limited number of smoothing steps, even allows convergence in cases where Richardson smoothing does not. Additionally, we have confirmed that, as pointed out already in the literature, non-symmetric \(V(2k,0)\) cycles yield faster convergence rates than their symmetric \(V(k,k)\) counterparts. Nevertheless, we note that when using the multilevel iteration as a preconditioner within some iterative solver, the use of \(V(2k,0)\) cycles prevents the use of a standard conjugate gradient solver, which has to be replaced by some solver accepting a non-symmetric preconditioner (e.g.\ BiCGStab).

With respect to our algorithms, we have introduced a simple but effective adaptive construction for the FSAI preconditioner, for matrices with a certain block structure. This includes the matrices arising from a PUM discretization, but could be extended to different scenarios. Additionally, we have provided a new formulation of the Chebyshev iteration of the fourth kind, which in our opinion is even simpler than the one originally given by Lottes \cite{Lottes2023}. We agree with him that this new Chebyshev iteration deserves more wide-spread recognition in contrast to the usual Richardson smoothing, and we hope that our work will make a contribution to that, even if a small one.

\bibliographystyle{ownabbrvnat}
\bibliography{references}

\end{document}